\title[Optimal learning via local entropies and sample compression]{Optimal learning via local entropies and sample compression}
 \newcommand{\E}{\mathbb{E}} 
 \newcommand{\Ind}{\mathbbm{1}} 
 \newcommand{\F}{\mathcal{F}}
  \newcommand{\G}{\mathcal{G}}
  \newcommand{\Y}{\mathcal{Y}}
 \newcommand{\sign}{\text{sign}}
  \newcommand{\X}{\mathcal{X}}
\DeclareMathOperator*{\argmin}{arg\,min}
 \author{\Name{Nikita Zhivotovskiy} \Email{nikita.zhivotovskiy@phystech.edu}\\
\addr  This work was prepared while the author was at Skoltech an IITP RAS.
 }
\begin{document}

\maketitle

\begin{abstract}
The aim of this paper is to provide several novel upper bounds on the excess risk with a primal focus on classification problems. We suggest two approaches and the obtained bounds are represented via the distribution dependent local entropies of the classes or the sizes of specific sample compression schemes. We show that in some cases, our guarantees are optimal up to constant factors and outperform previously known results. As an application of our results, we provide a new tight PAC bound for the hard-margin SVM, an extended analysis of certain empirical risk minimizers under log-concave distributions, a new variant of an online to batch conversion, and distribution dependent localized bounds in the aggregation framework. We also develop techniques that allow to replace empirical covering number or covering numbers with bracketing by the coverings with respect to the distribution of the data. The proofs for the sample compression schemes are based on the moment method combined with the analysis of voting algorithms.
\end{abstract}

\begin{keywords}
Empirical risk minimization, sample compression, local entropy, stability, bracketing conditions, VC classes,  hard margin SVM, online to batch conversion.
\end{keywords}

\section{Introduction}
One of the most important concepts in statistical learning is the notion of complexity of classes. The complexity defines the statistical properties of learning procedures and depends on not only the structure of a class $\F$, but also on the framework and its intrinsic properties such as the noise of the problem. The complexity also depends on the procedure of interest. It means that given a class $\F$ different statistical procedures appear to have different learning rates. Thus, the questions that are usually asked: what is the complexity of the class, which procedures achieve these rates and is it possible for any learning algorithm to have better risk bounds? If some statistical performance is known to be (almost) optimal it is interesting whether for a computationally efficient procedure we may achieve this optimal performance.

There are a lot of complexity measures and related risk bounds which occur in statistics / statistical learning. To name just a few, we can mention: the notion of VC dimension and Growth function \cite{Vapnik68, vapnik74}, that of Fat-shattering dimension \cite{Anthony99}, empirical and distribution-dependent covering (or packing) numbers \cite{Devr95, Tsybakov04, Massart06, Rakhlin13, Yang99}, the more recent notions of local/global and offset Rademacher complexities \cite{Bartlett05, Bartlett06,Liang15}, Alexander's capacity and the Disagreement coefficient \cite{Gine06, Hanneke15},  empirical and distribution-dependent local entropies \cite{Bshouty09, Mendelson15,Zhivotovskiy16}, and finally the size of sample compression sets \cite{Floyd95}.

The above list is far from exhaustive and other complexity measures are occasionally used in the literature. A number of contributions have investigated the relevance of these several complexity measures as well as their connections. In particular, some have been shown to provide a somehow full understanding of some statistical problems.

This paper discusses \emph{two} complexity measures (the interest of which will be argued) together with the learning algorithms and some associated generalization bounds. The first one is a \emph{distribution dependent local entropy} that is a complexity measure which controls the number of functions that are needed to cover an $L_{r}(P)$ ball of radius $2\varepsilon$ intersected with $\G$ by $L_{r}(P)$ balls of radius $\varepsilon$, where $\G$ is a so-called \emph{loss class} associated with $\F$. The second complexity measure is a \emph{size of a sample compression set}, which is the minimal size of a subsample of a given sample, such that the so-called \emph{reconstruction function}, given only this subsample is able to recover the remaining sample. A standard example is that of the optimal hyperplane (hard margin SVM) classifier \cite{vapnik74}, where the role of the compressed sample is played by the so-called support vectors \cite{vapnik74}. However, it will be clear from our discussions why we also require a special property, namely \emph{stability} for compression algorithms.

These two simple complexity measures, proposed above are of very different nature. The first one is connected to the classic condition of learning, namely, the uniform convergence of frequencies to their means \cite{Vapnik68}. The learning algorithm corresponding to the second complexity measure will be based on two other conditions which are directly connected with sufficient conditions for learning, namely stability \cite{Bousquet02} and sample compression \cite{Floyd95}. Our model will be a standard i.i.d. model under general $(\beta, B)$-Bernstein class conditions. However, in the case of sample compression, we will be in the specific case, the so-called realizable classification. For this case, we have in particular a binary loss and $(1, 1)$-Bernstein class condition. We will motivate our choice of the local entropy in a question/answer format. The motivation about the sample compression will be presented in Section \ref{comp}.
\subsection*{Main contributions of the paper} 
\begin{itemize}
\item A new technique for obtaining upper bounds in statistical learning under margin conditions. 
\item We weaken the conditions for several classic results (see section \ref{applications}). In some cases, we obtain the optimal (up to constant factors) learning rates using newly introduced distribution dependent complexity measures.
\item We obtain new high probability upper bounds for sample compression schemes. Based on them we construct an almost optimal learning algorithm for the class of half-spaces with polynomial computation complexity.  
\end{itemize}

\section{Notation}

We define the \emph{space of predictors} $\mathcal{X}$ and the \emph{space} of \emph{response variables} $\mathcal{Y} \subseteq \mathbb{R}$. 
Let $(X, Y)$ be a random variable with the unknown distribution $P$ taking its values in $\mathcal{X} \times \mathcal{Y}$. Marginal distribution of $X$ will be denoted by $P_X$.  We also assume that we are given a set of functions $\F$; these are measurable functions mapping $\mathcal{X}$ to $\mathcal{Y}$. Symbol $\wedge$ denotes minimum of two real numbers, $\vee$ denotes maximum of two real numbers and $\Ind[A]$ denotes an indicator of the event $A$. We will also consider abstract real-valued functional classes, which will usually be denoted by $\G$. By $\log(x)$ we mean truncated logarithm: $\ln(\max(x, e))$. The notation $f(n) \lesssim g(n)$ or $g(n) \gtrsim f(n)$ will mean that for some universal constant $c>0$ it holds that $f(n) \le cg(n)$ for all $n \in \mathbb{N}$. Similarly,  we introduce $f(n) \simeq g(n)$ to be equivalent to $g(n) \lesssim f(n) \lesssim g(n)$.

A \emph{learner} observes $\left((X_{1}, Y_{1}), \ldots, (X_{n}, Y_{n})\right)$, an i.i.d. training sample from an unknown distribution $P$. By $P_{n}$ we denote expectation with respect to the empirical measure (empirical mean) induced by these samples. Symbols $P$ and $\E$ denote expectations with respect to the true measure. We introduce the loss function $\ell: \mathbb{R}^2 \to \mathbb{R}_{+}$ that will measure our losses by predicting $\hat{Y}$ instead of $Y$. We further assume that for all $y \in \mathbb{R}$ it holds that $\ell(y, y) = 0$. The risk of $f$ is its expected loss, denoted $R(f) = \E\ell(f(X), Y)$. The function $f^* \in \F$ will be the minimizer of $R(f)$. \emph{Empirical risk minimization} (ERM) refers to any learning algorithm with the following property: given a training sample, it outputs a classifier $\hat{f}$ that minimizes $R_{n}(f) = P_{n}\ell(f(X),Y)$ among all $f \in \F$. Depending on context we will usually refer to $\hat{f}$ as an empirical risk minimizer and use the same abbreviation.
For a class $\mathcal{G} \subseteq L_{p}(P)$ and $f, g \in \mathcal{G}$ we denote 
$
\|f - g\|_{L_{p}(P)} = \left(\int\limits_{\mathcal{Z}}|f(z) -g(z)|^{p}dP(z)\right)^{\frac{1}{p}}
$
for $p > 0$.
In particular, if $p = 1$, then $\|f - g\|_{L_{1}(P)} = \E|f - g|$, $\|f - g\|_{L_{1}(P_{n})} = \frac{1}{n}\sum\limits_{i = 1}^{n}|f(Z_{i}) - g(Z_{i})|$ and if $p = \infty$ we have the standard $\|\ \|_{L_{\infty}} = \|\ \|_{\infty}$ norm.

In the special case when $\mathcal{Y} = \{1, -1\}$ we will consider the binary loss, that is $\ell(Y, \hat{Y}) = \Ind[Y \neq \hat{Y}]$. In this case we say that a set $\{x_1, \ldots, x_k\} \in \mathcal{X}^{k}$ is shattered by $\F$ if there are $2^k$ distinct classifications of $\{x_1, \ldots, x_k\}$ realized by classifiers in $\F$. The \emph{VC dimension} of $\F$ is the largest integer $d$ such that there exists a set $\{x_1, \ldots, x_d\}$ shattered by $\F$ \cite{Vapnik68}.
By the \emph{realizable case classification} we will mean the learning model with the binary loss such that for some $f^{*} \in \F$ it holds $Y = f^*(X)$.
\section{Relative deviations and ERM over the net}
The aim of this section is to present several simple upper bounds on the performance of learning procedures under $L_{2}(P)/L_{1}(P)$ entropy conditions. As discussed before, the important part of our analysis is that our technique avoids the symmetrization step.
At first we provide several notations. Consider the \emph{excess loss class} 
$\mathcal{L}_{\mathcal{Y}} = \{ (x, y) \to  \ell(f(x), y) - \ell(f^{*}(x), y) \ \text{for}\ f \in \F\}$, and the \emph{loss class} $\mathcal{G}_{\mathcal{Y}} = \{(x, y) \to  \ell(f(x), y)\ \text{for}\ f \in \F\}$.
\begin{definition}[Bernstein condition \cite{Bartlett06, Lecue11}]
Class $\G$ satisfies the $(\beta, B)$-Bernstein condition if for all $g \in \G$
\[
Pg^{2} \le B(Pg)^{\beta}
\]
for some $\beta \in [0, 1]$ and $B > 1$.
\end{definition}
This condition naturally generalizes other well known conditions and appears naturally in certain misspecified models under the convexity of $\F$ \cite{Bartlett06, Lecue11}. Moreover under appropriate assumptions it holds in the misspecified case even without the convexity assumption (see Mendelson \cite{Mendelson08}). We will also work with the related and more restrictive (in case when $\|g\|_{\infty} \le 1$) condition 
\begin{definition}[$L_1$-Bernstein condition]
\label{strong}
Class $\G$ satisfies the $L_1$-Bernstein condition with parameters $\beta, B$ if for all $g \in \G$
\[
P|g| \le B(Pg)^{\beta}
\]
for some $\beta \in [0, 1]$ and $B > 1$.
\end{definition}
The last condition appears naturally in some situations. It is, for example, equivalent to the Bernstein condition for the excess loss class in the binary classification and as we show later also includes the well-known Massart's and Tsybakov noise conditions \cite{Massart06, Tsybakov04}. Moreover, it is trivally valid for the loss classes of nonnegative functions with $\beta = B = 1$. One may always relate both conditions in the opposite way by using $P|g| \le (Pg^2)^{\frac{1}{2}} \le B^{\frac{1}{2}}(Pg)^{\frac{\beta}{2}}$.

Given a class $\G \subset L_{r}(P)$ for $r \ge 1$ we define the covering number $\mathcal{N}(\G, \varepsilon)$ of $\G$ as the minimal number of functions $g_{1}, \ldots, g_{N} \in \G$ (we will consider only proper coverings) such that for all $g \in \G$ there exists $j \in \{1, \ldots, N\}$ such that $\|g - g_{j}\|_{L_{r}(P)} \le \varepsilon$. Let $\mathcal{B}_{L_r}(g, \varepsilon)$ be a ball in $L_{r}(P)$ of radius $\varepsilon$ with the center $g$. Finally, for $\beta \in [0, 1]$ and $B > 1$ we introduce 
\begin{equation}
\mathcal{D}^{\text{loc}}_{L_r}(\G, \varepsilon, \beta, B) = \sup\limits_{\gamma \ge \varepsilon}\sup\limits_{g \in \G}\log\left(\mathcal{N}(\G \cap \mathcal{B}_{L_r}(g, 2B\gamma^{\beta}), \gamma)\right),
\end{equation}
which will be referred to as a \emph{local entropy}. When the class of interest is clear, we will avoid writing it as an argument. Another quantity of interest will be the local entropy with bracketing. We need several extra definitions. Let $f_{1}, f_{2} \in L_{r}(P)$ and $f_{1} \le f_{2}$ with probability one. If $\|f_{1} - f_{2}\|_{L_{r}(P)} \le \varepsilon$, then  $\varepsilon$-\emph{bracket} consist of all functions, such that $f \in L_{r}(P)$ and $f_{1} \le f \le f_{2}$. Given a class $\G \subset L_{r}(P)$ we define a bracketing entropy $\mathcal{N}_{[\ ]}(\G, \varepsilon)$ as a minimal number of $\varepsilon$-brackets $B_{1}, \ldots, B_{N}$, such that $\G \subseteq \cup_{i = 1}^{N}B_i$. In the same manner we define the \emph{local entropy with bracketing}:
\begin{equation}
\mathcal{D}_{[\ ], L_r}^{\text{loc}}(\G, \varepsilon, \beta, B) = \sup\limits_{\gamma \ge \varepsilon}\sup\limits_{g \in \G}\log\left(\mathcal{N}_{[\ ]}(\G \cap \mathcal{B}_{L_r}(g, 2B\gamma^{\beta}), \gamma)\right),
\end{equation}
The local entropy is known in statistical learning theory. It appeared in the analysis of linear half-spaces \cite{Bshouty09}, in the lower bounds for a many statistical problems and more recently in the analysis of convex regression \cite{Mendelson15}. Interestingly that the upper bounds are provided not for the empirical risk minimizer, but for a specially designed algorithm, namely ERM over the $\varepsilon$-net of the functional class, which is less preferable in terms of computational efficiency. We will discuss this algorithm later. Before going to our results we introduce two \emph{fixed points} which will be essentially our complexity measures
\begin{equation}
\gamma_{L_r}(\G, k, \beta, B) = \inf\{\varepsilon > 0: k\mathcal{D}^{\text{loc}}_{L_r}\left(\G, \varepsilon^\frac{1}{2 - \beta}, B, \beta\right) \le \varepsilon\}
\end{equation}
and
\begin{equation}
\gamma_{[\ ], L_r}(\G, k, \beta, B) = \inf\{\varepsilon > 0: k\mathcal{D}_{[\ ], L_r}^{\text{loc}}\left(\G, \varepsilon^\frac{1}{2 - \beta}, B, \beta\right) \le \varepsilon\}
\end{equation}
To simplify the notation sometimes we will not write $\beta$ or $B$ as an argument of $\gamma_{[\ ], L_r}(\ )$ and $\mathcal{D}_{L_r}^{\text{loc}}(\ )$. 
Before we start to prove this result we need several lemmas. Versions of the next lemma are known in the literature for the local entropy without bracketing (see Lemma $2$ in \cite{Bshouty09} or Lemma $2.2$ in \cite{Mendelson15}). We simply adapt these arguments to our case. Denote for simplicity $\mathcal{N}(\rho, \varepsilon) = \sup\limits_{g \in \G}\mathcal{N}(\G \cap \mathcal{B}_{L_1}(g, \rho), \varepsilon)$ and $\mathcal{N}_{[\ ]}(\delta, \varepsilon) = \sup\limits_{g \in \G}\mathcal{N}_{[\ ]}(\G \cap \mathcal{B}_{L_1}(g, \rho), \varepsilon)$.
\begin{lemma} It holds for any $B > 1, \beta \in [0, 1], \varepsilon \in [0, 1]$ and $\delta > 1$
\label{bound}
\[
\log(\mathcal{N}_{[\ ]}(2\delta B\varepsilon^\beta, \varepsilon))  \le \log_{4}\left(16\delta\right)\mathcal{D}_{[\ ]}^{\text{loc}}(\G, \varepsilon, \beta, B)
\]
and
\[
\log(\mathcal{N}(2\delta B\varepsilon^\beta, \varepsilon))  \le \log_{2}(4\delta)\mathcal{D}^{\text{loc}}(\G, \varepsilon, \beta, B).
\]
\end{lemma}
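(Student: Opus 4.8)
The plan is to obtain the covering by a \emph{geometric refinement through scales}: starting from the single ball $\G\cap\mathcal{B}_{L_1}(g,2\delta B\varepsilon^\beta)$, apply the one--scale local entropy bound repeatedly, each time replacing the current family of pieces by a finer one, until the pieces have size $\varepsilon$. Precisely, I would build a strictly decreasing sequence of radii $\rho_0=2\delta B\varepsilon^\beta>\rho_1>\dots>\rho_m=\varepsilon$ and maintain the invariant that, after round $j$, the set $\G\cap\mathcal{B}_{L_1}(g,\rho_0)$ is covered by at most $\exp\!\big(j\,\mathcal{D}_{[\ ]}^{\text{loc}}(\G,\varepsilon,\beta,B)\big)$ brackets of width $\rho_j$, each of which, if it meets $\G$, lies inside $\mathcal{B}_{L_1}(h,\rho_j)$ for some $h\in\G$. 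The induction step is precisely the definition of the local entropy: given such an $h$, choose the next scale $\gamma=\rho_{j+1}$ so that $2B\gamma^\beta\ge\rho_j$ and $\gamma\ge\varepsilon$; then $\mathcal{B}_{L_1}(h,\rho_j)\subseteq\mathcal{B}_{L_1}(h,2B\gamma^\beta)$, so $\G\cap\mathcal{B}_{L_1}(h,\rho_j)$ is covered by at most $\exp(\mathcal{D}_{[\ ]}^{\text{loc}})$ brackets of width $\gamma$; these, after discarding the ones that miss $\G$ and recentering each surviving one at a point of $\G$ it contains, form the family for round $j+1$. Multiplying the counts over the $m$ rounds yields $\log\mathcal{N}_{[\ ]}(2\delta B\varepsilon^\beta,\varepsilon)\le m\,\mathcal{D}_{[\ ]}^{\text{loc}}(\G,\varepsilon,\beta,B)$, and the identical argument with balls in place of brackets gives the second inequality; so everything reduces to producing a legal sequence $(\rho_j)$ with $m\le\log_4(16\delta)$ (resp.\ $m\le\log_2(4\delta)$).

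For the sequence, while the current radius exceeds $2B\varepsilon^\beta$ I take $\rho_{j+1}$ to be the solution of $2B\rho_{j+1}^\beta=\rho_j$, and once $\rho_j\le 2B\varepsilon^\beta$ a single last round with $\gamma=\varepsilon$ finishes (it is legal because then $\mathcal{B}_{L_1}(h,\rho_j)\subseteq\mathcal{B}_{L_1}(h,2B\varepsilon^\beta)$). In the bracketing case I can instead solve $2B\rho_{j+1}^\beta=\rho_j/2$, using that a width--$w$ bracket which meets $\G$ is contained in the $L_1$--ball of radius $w/2$ about its midpoint, so the refinement can be started from a ball of half the radius; this is what buys the improved constant. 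Writing $\rho_j=2\delta_jB\varepsilon^\beta$, the recursion becomes $\delta_{j+1}=\delta_j/(2B)$ when $\beta=1$, and a short computation gives the analogous bound for every $\beta\in[0,1]$: one round multiplies $\delta_j$ by a factor at most $1/2$ (at most $1/4$ in the bracketing case, because of the extra halving). Hence at most $\lceil\log_2\delta\rceil$ rounds bring $\delta_j$ down to $1$ (i.e.\ $\rho_j$ down to $2B\varepsilon^\beta$), after which one last round at scale $\varepsilon$ finishes; since $\lceil\log_2\delta\rceil+1\le\log_2\delta+2=\log_2(4\delta)$, and likewise $\lceil\log_4\delta\rceil+1\le\log_4\delta+2=\log_4(16\delta)$, these are precisely the claimed constants.

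The part I expect to be the main obstacle is the elementary but fiddly verification that the scheme of the previous paragraph really works uniformly in $\beta\in[0,1]$, $B>1$, $\varepsilon\in[0,1]$: one must check that each chosen $\gamma=\rho_{j+1}$ is indeed $\ge\varepsilon$, that $\rho_{j+1}<\rho_j$, and that the per--round contraction of $\delta_j$ is as claimed — this is where $\varepsilon\le 1$, $B>1$ and the presence of $2B\gamma^\beta$ (rather than $2\gamma$) in the definition of the local entropy are used. A subsidiary case to dispatch is when the outer radius $2\delta B\varepsilon^\beta$ already exceeds the diameter of $\G$ (which for bounded loss classes means $\delta$ is large relative to $1/\varepsilon$): then $\G\cap\mathcal{B}_{L_1}(g,2\delta B\varepsilon^\beta)=\G$ and one starts the refinement at an interior scale $\gamma$ with $\varepsilon\le\gamma$ and $2B\gamma^\beta\ge\mathrm{diam}(\G)$, which again costs no more than the allotted number of rounds. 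The remaining ingredients — the bracket/ball comparison giving the factor $2$ improvement, and the multiplicativity of the piece counts across rounds — are routine.
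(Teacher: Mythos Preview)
Your chaining-through-scales scheme is the same idea the paper uses, and for the non-bracketing inequality your plan is sound (once you reduce to outer radius at most the diameter, each round contracts $\rho_j$ by at least a factor $2$, giving $\lceil\log_2\delta\rceil+1\le\log_2(4\delta)$ rounds).

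The substantive gap is in the bracketing case. You propose to solve $2B\rho_{j+1}^\beta=\rho_j/2$, justified by the observation that a width-$w$ bracket lies in the $L_1$-ball of radius $w/2$ about its midpoint. The containment is true, but the midpoint is in general \emph{not} an element of $\G$, and the local bracketing entropy $\mathcal{D}_{[\ ]}^{\text{loc}}$ is a supremum over centres $g\in\G$ only. If instead you recentre at a point $h\in\G$ lying in the bracket (as you yourself do in the induction step), the triangle inequality gives only $B(m,w/2)\subseteq B(h,w)$, so the next refinement must start from radius $\rho_j$, not $\rho_j/2$. Your per-round contraction then drops from factor $4$ to factor $2$, and you recover $\log_2(4\delta)$ rather than $\log_4(16\delta)$.

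The paper secures the sharper constant by a different device. It keeps the target scale $\varepsilon$ fixed and, at each step, first covers the current ball $\G\cap B(g,\rho)$ by \emph{ordinary} $L_1$-balls of radius $\rho/4$ with centres in $\G$ (this is the proper covering $\mathcal{N}(\rho,\rho/4)$), and only then covers each of these by $\varepsilon$-brackets. The count $\mathcal{N}(\rho,\rho/4)$ is controlled via the standard comparison $\mathcal{N}(\rho,\rho/4)\le\mathcal{N}_{[\ ]}(\rho,\rho/2)$, which is then matched against $\mathcal{D}_{[\ ]}^{\text{loc}}$ at the intermediate scale $\gamma'=\rho/2$. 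The recursion is thus on $\mathcal{N}_{[\ ]}(\rho/4,\varepsilon)$, so the outer radius drops by $4$ per round while every factor is bounded by $\exp(\mathcal{D}_{[\ ]}^{\text{loc}})$. To get the stated $\log_4(16\delta)$ you should replace your midpoint argument by this covering/bracketing comparison, or by some other mechanism that keeps the centres in $\G$.
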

\begin{proof}
Denote for $\delta > 4$
\[
\mathcal{N}(2\delta B\gamma^\beta, \gamma) = \sup\limits_{g \in \mathcal{G}}\mathcal{N}(\mathcal G \cap B_{P}(g, 2\delta B\gamma^\beta), \gamma)
\]
and
\[
\mathcal{N}_{[\ ]}(2\delta B\gamma^\beta, \gamma) = \sup\limits_{g \in \mathcal{G}}\mathcal{N}_{[\ ]}(\mathcal G \cap B_{P}(g, 2\delta B\gamma^\beta), \gamma).
\]
Let $t_{1}, \ldots, t_{N}$ be centers of the minimal cover of $2\delta B\gamma^\beta$-ball intersected with $\G$ by $L_{1}$-balls with radius $\delta B\gamma^\beta/2$. The total number of them is bounded by $\mathcal{N}(2\delta B\gamma^\beta, \delta B\gamma^\beta/2)$. Now for a given $i$ we want to cover a set $\mathcal{B}_{L_1}(t_{i}, \delta B\gamma^\beta/2) \cap \mathcal{G}$ by the $\gamma$-brackets. Obviously, since $t_{i} \in \G$ for all $i$ the minimal number of brackets is bounded by $\mathcal{N}_{[\ ]}(\delta B\gamma^\beta/2, \gamma)$. Finally,
\[
\mathcal{N}_{[\ ]}(2\delta B\varepsilon^\beta, \varepsilon) \le \mathcal{N}(2\delta B\varepsilon^\beta, \delta B\varepsilon^\beta/2)\mathcal{N}_{[\ ]}(\delta B\varepsilon^\beta/2, \gamma).
\]
Using a standard bound (see \cite{Vaart96}) we have $\mathcal{N}(\rho, \rho/4) \le \mathcal{N}_{[\ ]}(\rho, \rho/2)$. Using the definition of the local entropy with bracketing we have
\begin{equation}
\label{step}
\mathcal{N}_{[\ ]}(2\delta B\gamma^\beta, \gamma) \le \exp(\mathcal{D}_{[\ ]}^{\text{loc}}(\G, \gamma, B, \beta))\mathcal{N}_{[\ ]}(\delta B\gamma^\beta/2, \gamma).
\end{equation}
We continue with the term $
\mathcal{N}_{[\ ]}(\delta B\gamma^\beta/2, \gamma)
$ in the same manner. If $\delta/16 > 1$, then we use the same decomposition \ref{step}. Otherwise, if $\delta/16 \le 1$
\begin{align*}
&\mathcal{N}_{[\ ]}(\delta B\gamma^\beta/2, \gamma) \le \mathcal{N}_{[\ ]}(8B\gamma^\beta, \gamma) \le \mathcal{N}(8B\gamma^\beta, 2B\gamma^\beta)\mathcal{N}_{[\ ]}(2B\gamma^\beta, \gamma) 
\\
&\le \mathcal{N}_{[\ ]}(8B\gamma^\beta, 4B\gamma^\beta)\exp(\mathcal{D}_{[\ ]}^{\text{loc}}(\G, \gamma)) \le \exp(2\mathcal{D}_{[\ ]}^{\text{loc}}(\G, \gamma)).
\end{align*} Continuing in the same manner we have
\[
\mathcal{N}_{[\ ]}(\delta, \gamma) \le \exp(\mathcal{D}_{[\ ]}^{\text{loc}}(\G, \gamma))^{(\log_{4}(\delta) + 2)}.
\]
\end{proof}
The next lemma is a bound on the supremum of the \emph{shifted-type process} $\sup\limits_{g \in \mathcal{G}}(Pg - (1+c)P_{n}g)$ in terms of the local entropy with bracketing. These processes are used in the literature in various contexts. For example, there were previously introduced to obtain non-exact oracle inequalities (see Lecu\'e and Mitchel \cite{Lecue12} or Wegkamp \cite{Wegkamp03}) or to obtain sharp bounds in the binary classification but using symmetrization techniques (see Zhivotovskiy and Hanneke \cite{Zhivotovskiy16}). However, contrary to previous results, our bound is represented via a localized complexity measure for an infinite class under margin conditions, does not involve a direct symmetrization and holds with high probability.
\begin{lemma}[Uniform relative deviations under $L_1$-Bernstein condition]
\label{mainlemma}
Let $\G \subset L_{1}(P)$ be a class of functions, such that $0 \in \G$, $P g \ge 0$ and $\|g\|_{\infty} \le 1$ for all $g \in \G$, the $L_1$-Bernstein condition holds with parameters $B, \beta$. Then for any fixed $c \ge 1$ with probability at least $1 - \delta$ it holds
\[
\sup\limits_{g \in \mathcal{G}}(Pg - (1+c)P_{n}g) \lesssim \left(\gamma_{[\ ], L_1}\left(\G, \frac{c'B}{n}, \beta, B\right) + \frac{c'B\log(\frac{1}{\delta})}{n}\right)^{\frac{1}{2 - \beta}}, 
\]
where $c' = 64(1 + c)^2$. 
\end{lemma}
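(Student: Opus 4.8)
\emph{Proof plan.} The argument is a peeling (slicing) over the level sets of the functional $g\mapsto Pg$, combined with a bracketing reduction — which supplies a \emph{deterministic} envelope for the empirical process on each slice and so removes the need for symmetrization — with Bernstein's inequality and a union bound. Fix a threshold $r>0$ (to be pinned down via the fixed point), put $\sigma_0=r$, $\sigma_j=2^jr$, and decompose $\G=\G_0\cup\bigcup_{j\ge1}\G_j$ with $\G_0=\{g\in\G:Pg\le r\}$ and $\G_j=\{g\in\G:\sigma_j/2<Pg\le\sigma_j\}$. Since $\|g\|_\infty\le1$ forces $Pg\le1$, only $O(\log(1/r))$ of the slices are non-empty. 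The role of the $L_1$-Bernstein condition is to put each slice into an $L_1(P)$-ball around $0\in\G$: for $g\in\G_j$, $\|g-0\|_{L_1(P)}=P|g|\le B(Pg)^\beta\le B\sigma_j^\beta$, so $\G_j\subseteq\G\cap\mathcal{B}_{L_1}(0,2B\sigma_j^\beta)$, which is precisely the kind of set whose bracketing number is governed by $\mathcal{D}_{[\ ],L_1}^{\text{loc}}$.

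For each $j$ take a bracketing $\varepsilon_j$-cover of $\G\cap\mathcal{B}_{L_1}(0,2B\sigma_j^\beta)$, with $\varepsilon_j$ a small multiple of $\sigma_j$ (for $j=0$ one needs the smaller scale $\varepsilon_0\asymp r/(1+c)$; these constants are fixed at the end), and truncate the bracket functions to $[-1,1]$, which only shrinks the brackets and preserves the covering. Lemma~\ref{bound} converts the gap between the ball radius $2B\sigma_j^\beta$ and the scale $\varepsilon_j$ into $\log N_j\lesssim\log(e(1+c))\,\mathcal{D}_{[\ ],L_1}^{\text{loc}}(\G,\varepsilon_j,\beta,B)=:\log(e(1+c))\,D_j$, with $D_j$ non-increasing in $j$. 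Each lower bracket function $\ell$ of slice $j$ satisfies $P\ell^2\le\|\ell\|_\infty P|\ell|\le 2B\sigma_j^\beta+\varepsilon_j\lesssim B\sigma_j^\beta$ (comparing $\ell$ with a member of its bracket), so the one-sided Bernstein inequality and a union bound over all $\sum_jN_j$ lower functions — with failure budget $\asymp\delta\,2^{-j}$ split inside slice $j$ — give: with probability at least $1-\delta$, for every $j$ and every such $\ell$,
\[
P\ell-P_n\ell\;\le\;t_j,\qquad t_j\lesssim\sqrt{\frac{B\sigma_j^\beta\Lambda_j}{n}}+\frac{\Lambda_j}{n},\qquad \Lambda_j\lesssim D_j+j+\log(1/\delta).
\]

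On this event, for $g\in\G_j$ lying in a bracket $[\ell,u]$, using $\ell\le g\le u$, $1+c>0$, $P(u-\ell)\le\varepsilon_j$ and $P\ell\ge Pg-\varepsilon_j$,
\[
Pg-(1+c)P_ng\;\le\;Pu-(1+c)P_n\ell\;=\;P(u-\ell)-cP\ell+(1+c)(P\ell-P_n\ell)\;\le\;(1+c)\varepsilon_j+(1+c)t_j-cPg.
\]
For $j\ge1$ we have $-cPg\le-c\sigma_j/2$; choosing $\varepsilon_j$ a small enough multiple of $\sigma_j$ makes $-c\sigma_j/2$ dominate $(1+c)\varepsilon_j$, and as soon as $\Lambda_j$ is at most a suitable fraction of $n\sigma_j^{2-\beta}/B$ — i.e.\ $(1+c)t_j\lesssim\sigma_j$ — it also dominates $(1+c)t_j$; hence every slice $j\ge1$ contributes $\le0$. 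Thus $\sup_{g\in\G}\bigl(Pg-(1+c)P_ng\bigr)$ is at most the slice-$0$ contribution $(1+c)\varepsilon_0+(1+c)t_0$.

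Finally one chooses $r$ via the fixed point. Controlling the $\log(1/\delta)$ inside each $\Lambda_j$ needs $r^{2-\beta}\gtrsim(1+c)^2B\log(1/\delta)/n$, which forces the additive term $c'B\log(1/\delta)/n$ with $c'=64(1+c)^2$ into the bound; the $j$-terms are harmless because $\sigma_j^{2-\beta}=(2^jr)^{2-\beta}$ grows geometrically in $j$ (and $\log(1/\delta)\ge1$); and, since $D_j$ is non-increasing and $\varepsilon_j$ is comparable to $\sigma_j$ (up to the $(1+c)^{-1}$ factor in slice $0$), the $D_j$-requirements — for slice $0$, which has no negative budget, one needs $\Lambda_0$ below a fraction of $nr^{2-\beta}/\bigl((1+c)^2B\bigr)$, which is where the $(1+c)^2$ is needed — collapse, after the substitution $\varepsilon=\sigma^{2-\beta}$ and after absorbing the $\log(e(1+c))$ factor of Lemma~\ref{bound} into the slack of $c'$, to the single requirement that $r$ exceed a fixed multiple of $\gamma_{[\ ],L_1}(\G,c'B/n,\beta,B)^{1/(2-\beta)}$. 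This is exactly what the fixed point provides, and is where the precise shape of $\gamma_{[\ ],L_1}$ (the argument $\varepsilon^{1/(2-\beta)}$ of $\mathcal{D}^{\text{loc}}$ and the parameter $c'B/n$) is used; taking $r^{2-\beta}$ to be a fixed multiple of $\gamma_{[\ ],L_1}(\G,c'B/n,\beta,B)+c'B\log(1/\delta)/n$ then makes everything fit, and with $\varepsilon_0\asymp r/(1+c)$ and $(1+c)t_0\lesssim r$ the slice-$0$ bound is of the claimed order. The main obstacle is precisely this bookkeeping of the $c$-dependence — keeping $c$ confined to $c'$ (hence to $\gamma_{[\ ],L_1}$) rather than letting it escape as an overall factor; the two dangerous spots are the $\log(e(1+c))$ loss in Lemma~\ref{bound} and the $(1+c)\varepsilon_0$ contribution of the minimal slice, whose bracket scale must be driven down to $\asymp r/(1+c)$ while its complexity $\mathcal{D}^{\text{loc}}$ is then measured at that finer scale, both absorbed by the quadratic-in-$c$ cushion in $c'=64(1+c)^2$ together with the monotonicity of the local entropy. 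Routine side points: truncating the bracket functions to $[-1,1]$, the finiteness of the number of slices, and the truncated-logarithm convention.
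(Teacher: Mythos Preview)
Your approach is essentially the paper's: bracketing, peeling over level sets, Bernstein on the bracket functions, union bound, and control of the slice cardinalities via Lemma~\ref{bound}. The only organisational difference is that the paper brackets \emph{once globally} at scale $\varepsilon^{1/(2-\beta)}$ (keeping only the lower bracket functions $p[g]$, so the passage costs a single additive $\varepsilon^{1/(2-\beta)}$ with no $(1+c)$ in front) and then peels the finite set $p[\G]$ over $P|g|$; you peel $\G$ over $Pg$ first and bracket each slice at its own scale, using both bracket ends. Both routes lead to the same two-part estimate (an ``outer'' peeled part that is nonpositive with high probability and a small ``inner'' ball handled by a direct union bound), and both end at the same fixed point.

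One bookkeeping point deserves care. In your slice-$0$ bound $(1+c)\varepsilon_0+(1+c)t_0$ you drive $\varepsilon_0\asymp r/(1+c)$, which forces the local entropy to be read at the finer scale $r/(1+c)$; since $\mathcal{D}^{\text{loc}}_{[\ ],L_1}$ is non-increasing in its scale argument, this is \emph{not} dominated by the fixed-point condition stated at scale $r$, so your claim that ``monotonicity of the local entropy'' absorbs this is the wrong direction. The paper sidesteps this by using only the lower bracket at scale $\varepsilon^{1/(2-\beta)}$ (independent of $c$) and bounding the shifted process $Pp[g]-(1+c)P_np[g]$ directly via Bernstein at threshold $(\varepsilon^{1/(2-\beta)}+cPp[g])/(1+c)$; in effect the $(1+c)$ sits inside the Bernstein exponent rather than multiplying the bracket scale. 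Adopting that device on your minimal slice (or, equivalently, bracketing globally before peeling) closes the gap cleanly; the rest of your argument is fine.
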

\begin{remark}
It follows from the proof of the Lemma that using Cauchy-Schwarz it is straightforward to show that the same result holds also if one replaces $L_1$ by $L_2$.
\end{remark}
\begin{proof}
Fix $\varepsilon > 0$. Given a class $\G$ and a distribution $P$ we construct a $\varepsilon^\frac{1}{2 - \beta}$-covering with bracketing of the whole set (with respect to $L_{1}(P)$ metric). Let $p$ denote the projection on the smallest function in the bracket, $p[\G]$ be a set of projections, that is the set of functions $\{p[g]| g \in \G\}$. In what follows we assume without the loss of generality that $0 \in p[\G]$. Then, since $p[g] \le g$ with probability $1$ we have
\begin{align}
\sup\limits_{g \in \mathcal{G}}(Pg - (1+c)P_{n}g)
&\le \sup\limits_{g \in \mathcal{G}}(Pg - Pp[g] + Pp[g] - (1+c)P_{n}p[g])
\\
&\le \varepsilon^\frac{1}{2 - \beta} + \sup\limits_{g \in \mathcal{G}}(Pp[g] - (1+c)P_{n}p[g]),
\end{align}
We denote $\G_{0} = p[\G] \cap \mathcal{B}_{L_{1}}(0,2B\varepsilon^\frac{\beta}{2 - \beta})$ and $\G_{1} = \{0\} \cup(p[\G] \setminus \mathcal{B}_{L_{1}}(0, 2B\varepsilon^\frac{\beta}{2 - \beta}))$, obviously $\G_{0} \cup \G_{1} = p[\G]$. We rewrite the last summand as
\begin{align*}
\sup\limits_{g \in \mathcal{G}}(Pp[g] - (1+c)P_{n}p[g]) &\le \sup\limits_{g \in \G_{0}}(Pg - (1 + c)P_{n}g) 
+ \sup\limits_{g \in \G_{1}}(Pg - (1 + c)P_{n}g).
\end{align*}
\emph{Step 1.} At first, we focus on $ \sup\limits_{g \in \G_{1}}(Pg - (1 + c)P_{n}g)$. We estimate the following quantity
\begin{align*}
&P(\exists g \in \G_{1}:  P_{n}g < \frac{1}{1 + c}Pg)
\\
&\le \sum\limits_{j = 1}^{\infty} P(\exists g \in p[\G]: P|g| \in [2^jB\varepsilon^\frac{\beta}{2 - \beta} , 2^{j + 1}B\varepsilon^\frac{\beta}{2 - \beta} ] \ \cap\ P_{n}g < \frac{1}{1 + c}Pg) 
\end{align*}
Given a function $g \in p[\G]$ with $P|g| \in [2^jB\varepsilon^\frac{\beta}{2 - \beta} , 2^{j + 1}B\varepsilon^\frac{\beta}{2 - \beta}]$ we consider 
\[
P(Pg - (1 + c)P_{n}g > 0) = P\left(Pg - P_{n}g > \frac{cPg}{1 + c}\right).
\]
Using the Bernstein inequality \cite{Lugosi13} and simple algebra we have since $Pg > 0$
\begin{align*}
 P\left(Pg - P_{n}g > \frac{cPg}{1 + c}\right) &\le \exp\left(-\frac{nc^2(Pg)^2}{(1 + c)^2(2Pg^2 + \frac{2cPg}{3(1 + c)})}\right)
 \\ 
 &\le \exp\left(-\frac{nc^2}{4(1 + c)}\left(\frac{(Pg)^{2}}{(1 + c)Pg^2}\land\frac{3Pg}{c}\right)\right).
\end{align*}
Let $g' \in \G$ be any function such that $p[g'] = g$ for $g \in p[\G]$ with $P|g| \ge 2B\varepsilon^{\frac{\beta}{2 - \beta}}$. Without loss of generality we may assume $\|g\|_{\infty} \le 1$ and $P(g' - g) \le \varepsilon^{\frac{1}{2 - \beta}}/2$. Using our assumption \eqref{strong} we have
\begin{align*}
P|g| &\le P|g'| + P|g - g'| \le P|g'| +  \varepsilon^{\frac{1}{2 - \beta}}/2 
\\
&\le  B(Pg')^{\beta} + \varepsilon^{\frac{1}{2 - \beta}}/2 \le B(Pg + Pg' - Pg)^{\beta} +  \varepsilon^{\frac{1}{2 - \beta}}/2
\\
&\le B(Pg)^{\beta} + 3B\varepsilon^{\frac{\beta}{2 - \beta}}/2 \le B(Pg)^{\beta} + \frac{3}{4}Pg.
\end{align*}
Thus, $P|g| \le 4B(Pg)^{\beta}$. Substituting, we have (provided that $(Pg)^{2 - \beta} \le Pg$, $B \ge 1$ and $Pg^2 \le P|g|$)
\begin{align*}
 P\left(Pg - P_{n}g > \frac{cPg}{1 + c}\right)  &\le \exp\left(-\frac{nc^2}{4(1 + c)}\left(\frac{(Pg)^{2-\beta}}{4B(1 + c)}\land\frac{3Pg}{c}\right)\right) 
 \\
 &= \exp\left(-\frac{nc^2(Pg)^{2 - \beta}}{16B(1 + c)^2}\right)
 \\
 &\le \exp\left(-\frac{nc^2(P|g|)^{\frac{2 - \beta}{\beta}}}{16B^{\frac{2}{\beta}}(1 + c)^2}\right)
 \\
 &\le \exp\left(-\frac{nc^{2}2^{j{\frac{(2 - \beta)}{\beta}}}\varepsilon}{16B(1 + c)^2}\right).
\end{align*}
We want to estimate the number of functions $g \in p[\G]$ with $P|g| \in [2^jB\varepsilon^\frac{\beta}{2 - \beta} , 2^{j + 1}B\varepsilon^\frac{\beta}{2 - \beta}]$. It is straightforward using Lemma \ref{bound}. A small technical detail is that we can not guarantee that our global minimal covering is still minimal when restricted on the subset. However, it is almost minimal in a sense that it is enough to consider in what is following $\mathcal{D}_{[\ ], L_1}^{\text{loc}}\left(\G, \varepsilon^\frac{1}{2 - \beta}/2, \beta, B\right)$ instead of $\mathcal{D}_{[\ ], L_1}^{\text{loc}}\left(\G, \varepsilon^\frac{1}{2 - \beta}, \beta, B\right)$. The argument is standard and is based on relations between minimal coverings and maximal packings and our technique of controlling the entropy, used in the proof of Lemma \ref{bound}.
Now,
\begin{align*}
&\sum\limits_{j = 1}^{\infty} P(\exists g \in p[\G]: P|g| \in [2^jB\varepsilon^\frac{\beta}{2 - \beta} , 2^{j + 1}B\varepsilon^\frac{\beta}{2 - \beta} ] \ \cap\ P_{n}g < \frac{1}{1 + c}Pg) 
\\
&\le \sum\limits_{j = 1}^{\infty} (2^{j + 5})^{\mathcal{D}_{[\ ], L_1}^{\text{loc}}\left(\G, \varepsilon^\frac{1}{2 - \beta}, \beta, B\right)/\log(4)}
\exp\left(-\frac{nc^{2}2^{j}\varepsilon}{16B(1 + c)^2}\right)
\\
&\le \sum\limits_{j = 1}^{\infty} \exp\left(\frac{(j + 5)\mathcal{D}_{[\ ], L_1}^{\text{loc}}\left(\G, \varepsilon^\frac{1}{2 - \beta}/2\right)}{\log(4)}
-\frac{nc^{2}(j + 5)\varepsilon}{48B(1 + c)^2}\right).
\end{align*}
Provided that $n \ge \frac{48B(1 + c)^2}{c^2\log(4)}\left(\frac{\mathcal{D}_{[\ ], L_1}^{\text{loc}}\left(\G, \varepsilon^\frac{1}{2 - \beta}/2, \beta, B\right)}{\varepsilon} + \frac{\log(\frac{1}{\delta})}{\varepsilon}\right)$ the last term is upper bounded by $\frac{\delta}{2}$.
Therefore, with probability at least $1 - \frac{\delta}{2}$ it holds that for all $g \in \G_{1}$ we have $Pg - (1 + c)P_{n}g \le 0$.
Thus, on this event $\sup\limits_{g \in \G_{1}}(Pg - (1 + c)P_{n}g) = 0$.

\emph{Step 2.} Now we work with $\sup\limits_{g \in \G_{0}}(Pg - (1 + c)P_{n}g)$. We consider only the interesting range $\varepsilon \in [0, 1]$. To control this process we use the Bernstein inequality together with the union bound, taking into account that $|\G_{0}| \le \exp\left(\mathcal{D}_{[\ ], L_1}^{\text{loc}}\left(\G, \varepsilon^\frac{1}{2 - \beta}/2, \beta, B\right)\right)$ and that as before $P|g| \le P|g'| + P|g - g'| \le  P|g'| + \varepsilon^{\frac{1}{2 - \beta}} \le 2B\varepsilon^{\frac{\beta}{2 - \beta}}$
\begin{align*}
&P(\sup\limits_{g \in \G_{0}}(Pg - (1 + c)P_{n}g) \ge  \varepsilon^\frac{1}{2 - \beta})
\\
&\le \sup\limits_{g \in \G_{0}}\exp\left(\mathcal{D}_{[\ ], L_1}^{\text{loc}}\left(\G, \varepsilon^\frac{1}{2 - \beta}/2, \beta, B\right) -\frac{n}{4(1 + c)}\left(\frac{( \varepsilon^\frac{1}{2 - \beta} + cPg)^{2}}{(1 + c)Pg^2}\land3( \varepsilon^\frac{1}{2 - \beta} + cPg)\right)\right)
\\
&\le \sup\limits_{g \in \G_{0}}\exp\left(\mathcal{D}_{[\ ], L_1}^{\text{loc}}\left(\G, \varepsilon^\frac{1}{2 - \beta}/2, \beta, B\right) -\frac{n}{4(1 + c)}\left(\frac{( \varepsilon^\frac{1}{2 - \beta} + cPg)^{2}}{2(1 + c)B\varepsilon^{\frac{\beta}{2 - \beta}}}\land3( \varepsilon^\frac{1}{2 - \beta} + cPg)\right)
\right)
\\
&\le \exp\left(\mathcal{D}_{[\ ], L_1}^{\text{loc}}\left(\G, \varepsilon^\frac{1}{2 - \beta}/2, \beta, B\right) -\frac{n}{4(1 + c)}\left(\frac{\varepsilon}{2(1 + c)B}\land3 \varepsilon^\frac{1}{2 - \beta}\right)
\right)
\\
&\le \exp\left(\mathcal{D}_{[\ ], L_1}^{\text{loc}}\left(\G, \varepsilon^\frac{1}{2 - \beta}/2, \beta, B\right) -\frac{n}{4(1 + c)}\left(\frac{\varepsilon}{2(1 + c)B}\right)
\right).
\end{align*}
By taking $n \ge 8B(1 + c)^2\left(\frac{\mathcal{D}_{[\ ], L_1}^{\text{loc}}\left(\G, \varepsilon^\frac{1}{2 - \beta}/2, \beta, B\right)}{\varepsilon} + \frac{\log(\frac{2}{\delta})}{\varepsilon}\right)$ we obtain that with probability at least $1 - \frac{\delta}{2}$ we have
$
\sup\limits_{g \in \G_{0}}(Pg - (1 + c)P_{n}g) \le \varepsilon^\frac{1}{2 - \beta}.
$

\emph{Step 3.} Using a union bound for events from Steps $1$ and $2$ with probability at least $1 - \delta$, given that $n \ge 24B(1 + c)^2\left(\frac{\mathcal{D}_{[\ ], L_1}^{\text{loc}}\left(\G, \varepsilon^\frac{1}{2 - \beta}/2, \beta, B\right)}{\varepsilon} + \frac{\log(\frac{2}{\delta})}{\varepsilon}\right)$ it holds
\[
\sup\limits_{g \in \G}(Pg - (1 + c)P_{n}g) \le 2\varepsilon^\frac{1}{2 - \beta}
\]
We denote $c' = 64(1 + c)^2$. Now taking
$
\gamma^{*}(\G, k,\beta, \delta) = \inf\{\varepsilon > 0: k(\mathcal{D}_{[\ ], L_1}^{\text{loc}}\left(\G, \varepsilon^\frac{1}{2 - \beta}/2, \beta, B\right) + \log(\frac{2}{\delta})) \le \varepsilon \} 
$
we have that with probability at least $1 - \delta$ for a given $n$ it holds $
\sup\limits_{g \in \G}(Pg - (1 + c)P_{n}g) \le 2(\gamma^{*}(\G, c'B/n, \beta, \delta))^\frac{1}{2 - \beta}
$. However, if we take $\gamma_{[\ ]}(\G, k, \beta) = \inf\{\varepsilon > 0: k\mathcal{D}_{[\ ], L_1}^{\text{loc}}\left(\G, \varepsilon^\frac{1}{2 - \beta}/2, \beta, B\right) \le \varepsilon\}$ it is straightforward to see (using the monotonicity of $\mathcal{D}_{[\ ]}^{\text{loc}}(.)$) that 
\[
\gamma^{*}(\G, c'B/n,\beta, \delta) \le \gamma_{[\ ]}(\G, c'B/n, \beta) + \frac{c'B\log(\frac{2}{\delta})}{n}.
\] The claim follows.
\end{proof}

The next important question is to understand how to estimate local entropies with bracketing. In some cases, this may be easily done. For example, for numerous nonparametric classes not only the upper bounds on the entropies are known, but it is also true that bracketing entropies and standard entropies are of the same order (see \cite{Vaart96, Tsybakov04, Massart06} and reference therein). Moreover, following Yang and Barron \cite{Yang99} for these nonparametric classes the local entropies are of the same order as the global entropies. However, controlling the local entropy with the bracketing for smaller classes does not seem trivial. Only recently, Gassiat and van Handel have provided a tight analysis for the local entropies with the bracketing for certain parametric classes of densities \cite{Gassiat12}. For general VC classes nothing more than the boundedness of entropies with bracketing is known \cite{adams12}.

When we are unable to guarantee that entropies with bracketing are close to the entropies without bracketing we may use the following strategy. The technique is based on the so-called \emph{skeleton estimates}: these algorithms are ERM over the $\varepsilon$-net of the initial class. Versions of this algorithm appear widely in the literature \cite{Devr95, Yang99, Tsybakov04, Bshouty09, Mendelson15, Rakhlin13}. This algorithm is more of a theoretical interest since it is unlikely that it will be computationally efficient compared to ERM over the class. However, to the best of our knowledge, our next result is the first localized result of this kind under general Bernstein conditions. In the following section we will demonstrate that is some cases this bound may recover the optimal learning rate.

\begin{theorem}[$L_1$ bound for ERM over the $\varepsilon$-net]
\label{cor}
Assume that the loss function is bounded by $1$ and for the excess loss class $\mathcal{L}_{\mathcal{Y}}$ the $L_1$-Bernstein condition holds with parameters $B, \beta$. Assume also that given $\eta \in [0, 1]$ one can select functions $f_{1}, \ldots, f_{N_{\eta}} \in  \F$ such that corresponding functions $\ell(f_{1}(X), Y), \ldots, \ell(f_{N_{\eta}}(X), Y)$ form a minimal $L_{1}(P)$ $\eta$-covering of the loss class $\G_{\Y}$. Define $\hat{f}_{\eta} = \argmin\limits_{f \in \{f_{1}, \ldots, f_{N_{\eta}}\}}R_{n}(f)$. If $\eta  \simeq \left(\gamma_{L_1}\left(\mathcal{G}_{\mathcal{Y}}, \frac{B}{n}, \beta, B\right) + \frac{B\log(\frac{1}{\delta})}{n}\right)^{\frac{1}{2 - \beta}}$, then with probability at least $1 - \delta$ it holds
\[
R(\hat{f}_{\eta}) - R(f^*) \lesssim \left(\gamma_{L_1}\left(\mathcal{G}_{\mathcal{Y}}, \frac{B}{n}, \beta, B\right) + \frac{B\log(\frac{1}{\delta})}{n}\right)^{\frac{1}{2 - \beta}}.
\]
\end{theorem}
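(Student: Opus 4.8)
The plan is to reduce the statement to a localized uniform deviation bound over the finite skeleton, of exactly the kind produced by Lemma \ref{mainlemma}, but with the complexity read off from the number of skeleton points lying inside $L_1(P)$-balls. Write $g_i=\ell(f_i(X),Y)-\ell(f^{*}(X),Y)$ and $\hat g=\ell(\hat f_\eta(X),Y)-\ell(f^{*}(X),Y)$. The set $\{g_1,\dots,g_{N_\eta}\}\cup\{0\}$ is a finite subset of the excess loss class $\mathcal{L}_{\mathcal{Y}}$, hence it satisfies the $L_1$-Bernstein condition with the same $B,\beta$, obeys $Pg_i=R(f_i)-R(f^{*})\ge 0$ because $f^{*}$ minimizes $R$ over $\F$, and has $\|g_i\|_\infty\le 1$. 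Since $\{\ell(f_i(X),Y)\}$ is an $\eta$-cover of $\mathcal{G}_{\mathcal{Y}}$ and $\ell(f^{*}(X),Y)\in\mathcal{G}_{\mathcal{Y}}$, some index $k$ satisfies $P|g_k|=\|g_k\|_{L_1(P)}\le\eta$, whence $Pg_k\le\eta$ and $Pg_k^2\le P|g_k|\le\eta$; Bernstein's inequality for the single function $g_k$ then gives $P_n g_k\lesssim\eta$ with probability at least $1-\delta/2$ (the stated size of $\eta$ forces $\log(1/\delta)/n\le\eta$). Using $P_n\hat g\le P_n g_k$ from the definition of $\hat f_\eta$,
\[
R(\hat f_\eta)-R(f^{*})=P\hat g\le 2P_n\hat g+\sup_{1\le i\le N_\eta}(Pg_i-2P_n g_i)\lesssim \eta+\sup_{1\le i\le N_\eta}(Pg_i-2P_n g_i),
\]
so it remains to show the last supremum is $O(\eta)$.

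I would prove this by rerunning the proof of Lemma \ref{mainlemma} (with $c=1$) on the \emph{finite} class $\{g_i\}\cup\{0\}$, rather than invoking the lemma verbatim: a black-box use bounds the supremum by the local \emph{bracketing} entropy of the skeleton, which for a finite net is essentially $\log\mathcal{N}(\mathcal{G}_{\mathcal{Y}},\eta)$ — the \emph{global} entropy, with its spurious logarithmic factor. Localization is instead recovered from the peeling step: split $\{g_i\}$ into the shells $\{i:P|g_i|\in[2^{j}B\eta^{\beta},2^{j+1}B\eta^{\beta}]\}$, $j\ge1$, together with the inner ball $\{i:P|g_i|\le 2B\eta^{\beta}\}$, exactly as in Steps 1 and 2 there; on each shell Bernstein's inequality (using $\|g_i\|_\infty\le1$, $Pg_i>0$ and $P|g_i|\le B(Pg_i)^\beta$, which now holds exactly since $g_i\in\mathcal{L}_{\mathcal{Y}}$) shows $Pg_i-2P_n g_i\le0$ with per-point failure probability $\exp(-c_1 n 2^{j}\eta^{2-\beta}/B)$, while on the inner ball a plain union bound controls the $O(\eta)$ fluctuation.

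The genuinely new ingredient is the size of shell $j$, namely the number of skeleton points in the $L_1(P)$-ball of radius $2^{j+1}B\eta^{\beta}$ about $0$. After the standard reduction of a minimal cover to a maximal $\eta$-packing (as in the proof of Lemma \ref{bound} and the remark inside the proof of Lemma \ref{mainlemma}) the skeleton may be taken $\eta$-separated, so this count is at most the $\eta/2$-covering number of $\mathcal{L}_{\mathcal{Y}}$ intersected with that ball, which equals the analogous covering number of $\mathcal{G}_{\mathcal{Y}}$ by translation invariance; the second, non-bracketing inequality of Lemma \ref{bound}, applied with its parameter of order $2^{j}$, bounds it by $\exp((j+O(1))\,\mathcal{D}^{\text{loc}}_{L_1}(\mathcal{G}_{\mathcal{Y}},\eta/2,\beta,B))$. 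Hence the peeling sum is at most
\[
\sum_{j\ge 1}\exp\!\Big((j+O(1))\,\mathcal{D}^{\text{loc}}_{L_1}(\mathcal{G}_{\mathcal{Y}},\eta/2,\beta,B)-\frac{c_1 n 2^{j}\eta^{2-\beta}}{B}\Big)+\exp\!\Big(\mathcal{D}^{\text{loc}}_{L_1}(\mathcal{G}_{\mathcal{Y}},\eta/2,\beta,B)-\frac{c_1 n\eta^{2-\beta}}{B}\Big),
\]
which converges and is below $\delta/2$ once $n\eta^{2-\beta}\gtrsim B(\mathcal{D}^{\text{loc}}_{L_1}(\mathcal{G}_{\mathcal{Y}},\eta/2,\beta,B)+\log(1/\delta))$; the point is that each linear-in-$j$ shell size is beaten by its own factor $2^{j}$, the very saving that the bracketing/uniform-sup viewpoint discards.

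To conclude, $\eta\simeq(\gamma_{L_1}(\mathcal{G}_{\mathcal{Y}},B/n,\beta,B)+B\log(1/\delta)/n)^{1/(2-\beta)}$ is at least $\gamma_{L_1}(\mathcal{G}_{\mathcal{Y}},B/n,\beta,B)^{1/(2-\beta)}$, so by monotonicity of $\mathcal{D}^{\text{loc}}_{L_1}$ in its scale, $B\,\mathcal{D}^{\text{loc}}_{L_1}(\mathcal{G}_{\mathcal{Y}},\eta/2,\beta,B)\le n\,\gamma_{L_1}(\mathcal{G}_{\mathcal{Y}},B/n,\beta,B)$, so the displayed sample-size requirement holds for an appropriate constant in $\simeq$, and the same choice keeps the approximation term $\eta$ of the target order; a union bound over the event for $g_k$, the shell events and the inner-ball event then gives $R(\hat f_\eta)-R(f^{*})\lesssim\eta$ with probability at least $1-\delta$. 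The step I expect to be the main obstacle is this shell-count estimate together with the passage from a minimal cover to a packing: it is exactly what turns the finite-skeleton bound into a \emph{local}, bracketing-free complexity, while the rest is a transcription of the proof of Lemma \ref{mainlemma}.
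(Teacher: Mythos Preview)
Your proposal is correct and follows essentially the same route as the paper's proof: both decompose the excess risk into a shifted-process supremum over the finite skeleton plus an approximation term controlled by a single Bernstein inequality, and both bound the supremum by rerunning the peeling argument of Lemma~\ref{mainlemma} on the finite net, using the non-bracketing half of Lemma~\ref{bound} to count skeleton points in each shell. The only cosmetic difference is that the paper uses $f^*_\eta=\argmin_{f_i}R(f_i)$ for the approximation term (with variance bounded via the $L_1$-Bernstein condition, giving $\exp(-n\eta^{2-\beta}/B)$), whereas you use the net point $f_k$ closest to $f^*$ in $L_1(P)$ (with variance $\le\eta$ directly, giving the slightly tighter $\exp(-cn\eta)$); both choices yield the same final bound.
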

\begin{proof}
Define $f^*_{\eta} = \argmin\limits_{f \in \{f_{1}, \ldots, f_{N_{\eta}}\}}R(f)$. We have since $R_{n}(\hat{f}_{\eta}) - R_{n}(f^*_{\eta}) \le 0$ for any $c \ge 1$
\begin{align*}
&R(\hat{f}_{\eta}) - R(f^*) 
\\
&\le R(\hat{f}_{\eta}) - R(f^*) - (1 + c)(R_{n}(\hat{f}_{\eta}) - R_{n}(f^*_{\eta}))
\\
&= R(\hat{f}_{\eta}) - R(f^*) - (1 + c)(R_{n}(\hat{f}_{\eta}) - R_{n}(f^*)) + (1 + c)(R_{n}(f^*_{\eta}) - R_{n}(f^*))
\\
&\le \sup\limits_{f \in \{f_{1}, \ldots, f_{N_{\eta}}\}}\left(R(f) - R(f^*) - (1 + c)(R_{n}(f) - R_{n}(f^*))\right) + (1 + c)(R_{n}(f^*_{\eta}) - R_{n}(f^*))
\\
&= \sup\limits_{g \in \{g_{1}, \ldots, g_{N_{\eta}}\}}\left(Pg - (1 + c)P_{n}g\right) + (1 + c)(R_{n}(f^*_{\eta}) - R_{n}(f^*)),
\end{align*}
where $g_{1}, \ldots, g_{N_{\eta}} \in \mathcal{L}_{\Y}$ correspond to $f_{1}, \ldots, f_{N_{\eta}}$. Now we analyze the second summand separately. Using Bernstein's inequality and the fact that $ 0 \le R(f^*_{\eta}) - R(f^*) \le \eta \le 1$
\begin{align*}
&P(R_{n}(f^*_{\eta}) - R_{n}(f^*) \ge R(f^*_{\eta}) - R(f^*) + \eta)
\\
&=P\left(R_{n}(f^*_{\eta}) - R_{n}(f^*)  - (R(f^*_{\eta}) - R(f^*)) \ge \eta\right)
\\
&\le \exp\left(-\frac{n}{4}\left(\frac{\eta^{2 - \beta}}{B}\land 3\eta\right)\right)
\\
&\le \exp\left(-\frac{n\eta^{2 - \beta}}{4B}\right).
\end{align*}
Given that $n > \frac{B\log(\frac{2}{\delta})}{4\eta^{2 - \beta}}$ the last summand is bounded by $\frac{\delta}{2}$. With probability at least $1 - \frac{\delta}{2}$ we have $(1 + c)(R_{n}(f^*_{\eta}) - R_{n}(f^*)) \le 2(1 + c)\eta$. Now denoting $\varepsilon^{\frac{1}{2 - \beta}} = \eta$ we have (since the moment condition holds for  $g_{1}, \ldots, g_{N_{\eta}}$) that with probability at least $1 - \frac{\delta}{2}$ it holds (see steps of Lemma \ref{mainlemma})
\[
\sup\limits_{g \in \{g_{1}, \ldots, g_{N_{\eta}}\}}\left(Pg - (1 + c)P_{n}g\right) \le \varepsilon^{\frac{1}{2 - \beta}}, 
\]
provided that $n \gtrsim B(1 + c)^2\left(\frac{\mathcal{D}^{\text{loc}}_{L_1}\left(\G, \varepsilon^\frac{1}{2 - \beta}/2,\ \beta,\ B\right)}{\varepsilon} + \frac{\log(\frac{2}{\delta})}{\varepsilon}\right)$. Using the union bound, it follows that with probability at least $1 - \delta$ under the same condition on $n$ it holds
$
R(\hat{f}_{\eta}) - R(f^*) \le (3 + 2c)\varepsilon^{\frac{1}{2 - \beta}}.
$
The first part of the claim follows. 
\end{proof}

The conditions of this Lemma related to the minimal covering hold naturally, for example, for the binary loss, since in this case 
\begin{equation}
\label{dist}
|\Ind[f(X) \neq Y] - \Ind[g(X) \neq Y]| =  |f(X) - g(X)|/2.
\end{equation}
Therefore in some cases if one wants to cover the loss class it is sufficiently and enough to cover the initial class $\F$. 

\section{Connecting the excess risk with the distances}
In this section we consider a type of new upper bounds proven under mild conditions.
Here we consider two special learning problems: learning with the binary loss and the regression with the quadratic loss. We start with two motivating examples.

The first example is a binary classification under Massart's noise conditions. Consider the binary classification problem with classes $\mathcal{Y} = \{1, -1\}$ and the function $f^* \in \F$, defined by $f^*(x) = \sign(\xi(x))$, where $\xi(X) = \E[Y|X]$. Then if there are $h_1, h_2 \in [0, 1]$ such that $h_1 \le |\xi(X)| \le h_2$, then for any class of binary classifiers $\F$ and all $g \in \mathcal{L}_{\mathcal{Y}}$ it holds
\begin{equation}
\label{distance}
h_1P|g| \le Pg \le h_2P|g|.
\end{equation}
To prove this we only need to use a well known formula (see \cite{Boucheron05}) that for any $f \in \F$ it holds $R(f) - R(f^*) = \E(|\xi(X)|\Ind[f(X) \neq f^*(X)])$. Therefore if $h_1 \le |\xi(X)| \le h_2$ then $h_1P|g| \le Pg \le h_2P|g|$ for any $g \in \mathcal{L}_{\mathcal{Y}}$. Recall that $Pg = R(f) - R(f^*)$, where $f$ corresponds to $g \in \mathcal{L}_{\mathcal{Y}}$ and by \eqref{dist} it holds $P|g| = \E|f(X) - f^*(X)|/2$. That is in this particular case we have a direct relation between $L_{1}(P)$ distance between $f$ and $f^*$ and the excess risk $R(f) - R(f^*)$ which holds up to constant factors $h_1, h_2$. In particular \eqref{distance} condition holds in the realizable case classification with $h_1 = h_2 = 1$.

Another example of an one to one correspondence between the excess risk and the distance is given by the following example. Consider the bounded regression model with a square loss, that is in particular $R(f) = \E(f(X) - Y)^2$. However, additionally we require that the model has zero mean independent noise. This includes the model $Y = f^{*}(X) + \varepsilon$, where $\varepsilon$ is independent from $X$, bounded and zero mean random variable. 
This case is interesting because there is a nice relation between the excess risk and the distance between corresponding functions, namely it holds $R(f) - R(f^*) = \|f - f^*\|^2_{L_2}$ and this is similar to what is implied by the condition \eqref{distance}. Indeed, 
\begin{align*}
R(f) - R(f^*) &= \E(f(X) - Y)^2 - \E(f^*(X) - Y)^2 
\\
&= \E(\tilde{f}(X) - f^*(X))^2 + 2\E(f(X) - f^*(X))(f^*(X) - Y)
\\
& = \|f - f^*\|^2_{L_2}.
\end{align*}
The concrete new result for the square loss is presented by Proposition \ref{wellspec} below.
Before we formulate the main result of this section we need to introduce a new fixed point. Define 
\[
\gamma^{*}_{L_1}(\G, k, \beta, B) = \inf\{\varepsilon > 0: k\mathcal{D}^{\text{loc}}_{L_1}\left(\G, B\varepsilon^\frac{\beta}{2 - \beta}, 1, 1\right) \le \varepsilon\}.
\]
The following result holds.
\begin{theorem}
\label{mainbound}
Under conditions of Theorem \ref{cor} fix instead $\eta  \simeq B\left(\gamma_{L_1}^*\left(\mathcal{G}_{\mathcal{Y}}, \frac{B}{n}, \beta, B\right) + \frac{B\log(\frac{1}{\delta})}{n}\right)^{\frac{\beta}{2 - \beta}}$. If there is $f_{\eta} \in \{f_{1}, \ldots, f_{N_{\eta}}\}$ such that for the corresponding $g_{\eta} \in \mathcal{L}_{\mathcal{Y}}$ it holds $P|g_{\eta}| \le \eta$ and
\begin{equation}
\label{strongercond}
P|g_{\eta}| \gtrsim B(Pg_{\eta})^\beta,
\end{equation}
then with probability at least $1 - \delta$ it holds
\begin{equation}
\label{superbound}
R(\hat{f}_{\eta}) - R(f^*) \lesssim \left(\gamma^*_{L_1}\left(\mathcal{G}_{\mathcal{Y}}, \frac{B}{n}, \beta, B\right) + \frac{B\log(\frac{1}{\delta})}{n}\right)^{\frac{1}{2 - \beta}}.
\end{equation}
\end{theorem}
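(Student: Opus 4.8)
The plan is to follow the proof of Theorem~\ref{cor}, but to anchor the empirical comparison at $f_\eta$ rather than at the empirical-risk reference $f^{*}_{\eta}$, and to localize the resulting shifted process at the \emph{coarse} scale $\eta\simeq B\big(\gamma^*_{L_1}(\mathcal{G}_{\mathcal{Y}},B/n,\beta,B)\big)^{\beta/(2-\beta)}$ instead of the fine scale $\simeq(\gamma_{L_1})^{1/(2-\beta)}$ used there. Let $g_1,\dots,g_{N_{\eta}}\in\mathcal{L}_{\mathcal{Y}}$ be the excess-loss functions corresponding to the net. Since $\hat{f}_\eta$ minimizes $R_n$ over the net and $f_\eta$ lies in the net, $R_n(\hat{f}_\eta)-R_n(f_\eta)\le0$, so for every $c\ge1$
\[
R(\hat{f}_\eta)-R(f^*)\ \le\ \sup_{1\le i\le N_{\eta}}\big(Pg_i-(1+c)P_ng_i\big)\ +\ (1+c)P_ng_\eta,
\]
and it suffices to bound both terms by a constant multiple (depending on $c$) of $t:=\big(\gamma^*_{L_1}(\mathcal{G}_{\mathcal{Y}},B/n,\beta,B)+B\log(1/\delta)/n\big)^{1/(2-\beta)}$.

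The term $(1+c)P_ng_\eta$ is controlled by Bernstein's inequality for the single function $g_\eta$. The requirement $P|g_\eta|\le\eta$ is automatic, since the net $\eta$-covers $\mathcal{G}_{\mathcal{Y}}$ and hence contains a function within $\eta$ of $\ell(f^*(\cdot),\cdot)$; condition~\eqref{strongercond} combined with the $L_1$-Bernstein condition gives $P|g_\eta|\simeq B(Pg_\eta)^\beta$, whence $Pg_\eta\lesssim(P|g_\eta|/B)^{1/\beta}\le(\eta/B)^{1/\beta}\simeq t$, while $Pg_\eta^2\le P|g_\eta|\le\eta\simeq Bt^\beta$ and $B\log(1/\delta)/n\le t^{2-\beta}$. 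Feeding these into $P_ng_\eta\le Pg_\eta+\sqrt{2Pg_\eta^2\log(3/\delta)/n}+\tfrac{2\log(3/\delta)}{3n}$ yields $(1+c)P_ng_\eta\lesssim t$ on an event of probability at least $1-\delta/3$.

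The heart of the proof is the shifted process, treated as in Steps~1--2 of Lemma~\ref{mainlemma} but localized at scale $\eta$. Split the net into its ``large'' part $\{i:P|g_i|>2\eta\}$ and its ``small'' part $\{i:P|g_i|\le2\eta\}$, and set $D':=\mathcal{D}^{\text{loc}}_{L_1}(\mathcal{G}_{\mathcal{Y}},\eta/4,1,1)$; choosing the implicit constant in the definition of $\eta$ so that $\eta/4\ge B\big(\gamma^*_{L_1}(\mathcal{G}_{\mathcal{Y}},B/n,\beta,B)\big)^{\beta/(2-\beta)}$, the defining relation of $\gamma^*_{L_1}$ and the monotonicity of the local entropy give $BD'/n\lesssim\gamma^*_{L_1}(\mathcal{G}_{\mathcal{Y}},B/n,\beta,B)\le t^{2-\beta}$. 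For the large part, dyadic peeling over $P|g_i|\in[2^j\eta,2^{j+1}\eta]$ together with the multiplicative Bernstein bound from Step~1 of Lemma~\ref{mainlemma} shows that for each such $g_i$, $P\big(Pg_i-(1+c)P_ng_i>0\big)\le\exp\!\big(-c''n(Pg_i)^{2-\beta}/(B(1+c)^2)\big)$; since $P|g_i|\le B(Pg_i)^\beta$ and $\eta\simeq Bt^\beta$ force $Pg_i\gtrsim2^{j/\beta}t$, this exponent is at least a universal constant times $2^{j(2-\beta)/\beta}(D'+\log(1/\delta))$, whereas Lemma~\ref{bound} with parameters $(1,1)$ bounds the number of net functions with $P|g_i|\le2^{j+1}\eta$ by $\exp\big((j+c_0)D'\big)$ for a universal $c_0$. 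Hence the geometric sum over $j$ is at most $\delta/3$, and on the corresponding event every large net function satisfies $Pg_i\le(1+c)P_ng_i$. For the small part, all such $g_i$ lie in $\mathcal{B}_{L_1}(0,2\eta)\cap\mathcal{L}_{\mathcal{Y}}\subseteq\mathcal{B}_{L_1}(g_\eta,3\eta)\cap\mathcal{L}_{\mathcal{Y}}$, so --- using the same Lemma~\ref{bound} argument and the standard passage from a minimal cover to a maximal packing invoked in Lemma~\ref{mainlemma}, which lets us localize at scale $\eta/4$ --- their number is at most $\exp(D_1)$ with $D_1\lesssim D'$; a union bound with the additive Bernstein inequality then gives, for each small $g_i$,
\[
Pg_i-(1+c)P_ng_i\ \le\ -cPg_i+(1+c)\Big(\sqrt{\tfrac{2Pg_i^2 D_1}{n}}+\tfrac{2D_1}{3n}\Big).
\]
Since $Pg_i^2\le P|g_i|\le B(Pg_i)^\beta$, optimizing $u\mapsto-cu+C\sqrt{u^\beta}$ over $u=Pg_i\ge0$ bounds the right-hand side by a constant multiple of $(BD_1/n)^{1/(2-\beta)}+D_1/n\lesssim t$. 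No bracketing enters anywhere, because the net is a finite set --- which is exactly why the bound is expressed through $\gamma^*_{L_1}$ rather than a bracketing fixed point.

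Collecting the three estimates via a union bound of total failure probability at most $\delta$ gives $R(\hat{f}_\eta)-R(f^*)\lesssim t$, which is \eqref{superbound}. The step I expect to be the main obstacle is the ``small'' part of the shifted process: the bound used in Lemma~\ref{mainlemma} would only control $Pg_i-(1+c)P_ng_i$ by the covering scale $\eta\simeq Bt^\beta$, which overshoots the target $t$ by the factor $Bt^{\beta-1}$ when $\beta<1$; recovering the sharp order $t$ forces one to keep the $-cPg_i$ term and optimize, and the reason this coarse localization is nevertheless enough is precisely the reference function $f_\eta$ granted by \eqref{strongercond}, whose own excess risk is already $O(t)$, so that nothing finer than an $O(t)$ bound on the supremum is ever required.
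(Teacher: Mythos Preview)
Your proof is correct and follows the same overall architecture as the paper: decompose as in Theorem~\ref{cor}, then handle the shifted process over the finite net via the two-step peeling of Lemma~\ref{mainlemma}. Two minor points are worth noting. First, you anchor at the assumed $f_\eta$, whereas the paper anchors at $f^*_{\eta}=\argmin_i R(f_i)$; these are interchangeable because $R(f^*_{\eta})\le R(f_\eta)$ and condition~\eqref{strongercond} applied to $f_\eta$ already forces $R(f_\eta)-R(f^*)\lesssim(\eta/B)^{1/\beta}\simeq t$. Second, in your union bound over the small part you should carry $D_1+\log(1/\delta)$ rather than $D_1$ alone through Bernstein; this is harmless since the extra term is already absorbed in $t^{2-\beta}$.

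The one substantive difference is your treatment of the ``small'' part, and here your worry is unfounded. You claim that re-running Step~2 of Lemma~\ref{mainlemma} would only yield a bound of order $\eta\simeq Bt^{\beta}$, forcing you to retain $-cPg_i$ and optimize. But in Step~2 the deviation level $\varepsilon^{1/(2-\beta)}$ is a \emph{free} parameter in Bernstein's inequality, decoupled from the covering scale; the only input needed is the variance bound $Pg_i^2\le P|g_i|\le 2\eta\simeq 2CB\varepsilon^{\beta/(2-\beta)}$, which is identical here. Targeting $t=\varepsilon^{1/(2-\beta)}$ directly gives exponent $\gtrsim n\,t^{2}/\eta\simeq n\varepsilon/B$, exactly as in Lemma~\ref{mainlemma}, and combined with $|\G_0|\le\exp(cD')$ and $nt^{2-\beta}/B\gtrsim D'+\log(1/\delta)$ this already delivers $\sup_{\G_0}(Pg_i-(1+c)P_ng_i)\le t$. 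Your optimization of $u\mapsto -cu+C\sqrt{Bu^{\beta}D_1/n}$ reaches the same conclusion by a longer route; it is a valid alternative, but the paper's direct argument is simpler and does not require keeping track of the maximizer.
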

\begin{proof} We follow the steps of the proof of Theorem \ref{cor}.
We consider the case when \eqref{strongercond} holds. In this case we have for $\eta_1 > 0$
\[
B(R(f^*_{\eta_1}) - R(f^*))^{\beta} \le C\E|\ell(f^*_{\eta_1}(X), Y) - \ell(f^{*}(X), Y)| \le \eta_1.
\]
for some absoulte constant $C > 1$. Now instead we fix $\eta_1 = CB\varepsilon^\frac{\beta}{2 - \beta}$. It follows that $R(f^*_{\eta_1}) - R(f^*) \le \varepsilon^{\frac{1}{2 - \beta}}$. Repeating the steps of Lemma \ref{mainlemma} we have with probability at least $1 - \frac{\delta}{2}$
\[
\sup\limits_{g \in \{g_{1}, \ldots, g_{N_{\eta_1}}\}}\left(Pg - (1 + c)P_{n}g\right) \le \varepsilon^{\frac{1}{2 - \beta}}, 
\]
provided that $n \gtrsim B(1 + c)^2\left(\frac{\mathcal{D}^{\text{loc}}_{L_1}\left(\G_{\Y}, B\varepsilon^\frac{\beta}{2 - \beta},\ 1,\ 1\right)}{\varepsilon} + \frac{\log(\frac{2}{\delta})}{\varepsilon}\right)$. The proof finishes as before.
\end{proof}
Observe that for the binary classification the condition \eqref{distance} with $h_1 \simeq h_2 \simeq B^{-1}$ already implies \eqref{strongercond} with $\beta = 1$ for all $g \in \mathcal{L}_{\mathcal{Y}}$. It is known for the binary classification problem defined above the condition of the form $P(|\xi(X)| \le t) \lesssim t^{\frac{\beta}{1 - \beta}}$ implies the $L_1$-Bernstein condition $P|g| \lesssim (Pg)^{\beta}$ (see Proposition $1$ in \cite{Tsybakov04}). It means that with high probability $|\xi(X)|$ is separated away from zero. At the same time to satisfy \ref{strongercond} we need to require a better control of $|\xi(X)|$. We show bellow that is sufficient to require that $|\xi(X)|$ is close to zero but only on a set of a small probability measure. 
\begin{lemma}
Consider the binary classification problem with classes $\mathcal{Y} = \{1, -1\}$ and the function $f^* \in \F$, defined by $f^*(x) = \sign(\xi(x))$, where $\xi(X) = \E[Y|X]$. Fix $\eta \in [0, 1]$ and consider the $L_1(P)$ minimal net $\{f_{1}, \ldots, f_{N_{\eta}}\}$ of $\F$ at scale $\eta$. Without the loss of generality we may assume that there is $f_{\eta} \in \{f_{1}, \ldots, f_{N_{\eta}}\}$ such that $P(f_{\eta}(X) \neq f^*(X)) = \eta$ (this equality may hold with an absolute constant before $\eta$). If for $t_0 = B^{\frac{-1}{\beta}}\eta^{\frac{1 - \beta}{\beta}}$ it holds
\begin{equation}
\label{condition_new}
P(|\xi(X)|\Ind[f_{\eta}(X) \neq f^*(X)] \ge t_0) \le B^{\frac{1}{1-\beta}}t_0^{\frac{1}{1 - \beta}}.
\end{equation}
then the condition \eqref{strongercond} holds.
\end{lemma}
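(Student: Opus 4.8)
The plan is to reduce \eqref{strongercond} to an elementary truncation estimate on a single nonnegative random variable. First I would record the two identities available in the binary setting: for the function $g_\eta \in \mathcal{L}_{\mathcal{Y}}$ associated with $f_\eta$ one has $Pg_\eta = R(f_\eta)-R(f^*) = \E\big(|\xi(X)|\Ind[f_\eta(X)\neq f^*(X)]\big)$ by the formula of \cite{Boucheron05} already used to establish \eqref{distance}, and $P|g_\eta| = \E|f_\eta(X)-f^*(X)|/2 = P(f_\eta(X)\neq f^*(X)) \simeq \eta$ by \eqref{dist} together with $f_\eta, f^* \in \{1,-1\}$. Hence \eqref{strongercond}, namely $P|g_\eta|\gtrsim B(Pg_\eta)^\beta$, is equivalent to the bound $Pg_\eta \lesssim (\eta/B)^{1/\beta} = B^{-1/\beta}\eta^{1/\beta}$, and this is the inequality I would prove.

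Write $Z = |\xi(X)|\Ind[f_\eta(X)\neq f^*(X)] \in [0,1]$, so that $Pg_\eta = \E Z$. The key step is to split $\E Z$ at the level $t_0 = B^{-1/\beta}\eta^{(1-\beta)/\beta}$ from the hypothesis:
\[
\E Z = \E\big(Z\,\Ind[Z< t_0]\big) + \E\big(Z\,\Ind[Z\ge t_0]\big).
\]
For the first summand, $Z<t_0$ on the region of integration and $\{Z>0\}\subseteq\{f_\eta\neq f^*\}$, so it is at most $t_0\,P(f_\eta(X)\neq f^*(X)) \simeq t_0\eta$. For the second summand I would use $Z\le 1$ to get $\E\big(Z\,\Ind[Z\ge t_0]\big)\le P(Z\ge t_0)$, and then apply the assumption \eqref{condition_new}, which states exactly that $P(Z\ge t_0)\le B^{1/(1-\beta)}t_0^{1/(1-\beta)} = (Bt_0)^{1/(1-\beta)}$.

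It remains to verify that $t_0$ was chosen so that the two contributions are of the same size. Directly, $t_0\eta = B^{-1/\beta}\eta^{1/\beta}$, while $Bt_0 = B^{1-1/\beta}\eta^{(1-\beta)/\beta} = (\eta/B)^{(1-\beta)/\beta}$, hence $(Bt_0)^{1/(1-\beta)} = (\eta/B)^{1/\beta} = B^{-1/\beta}\eta^{1/\beta}$ as well. Adding the two bounds gives $Pg_\eta = \E Z \lesssim B^{-1/\beta}\eta^{1/\beta}$, and therefore $B(Pg_\eta)^\beta \lesssim \eta \simeq P|g_\eta|$, which is \eqref{strongercond}. I do not expect a genuine obstacle: the only care needed is the exponent bookkeeping and the observation that $t_0$ is precisely the balancing point between the small-value term $t_0\eta$ and the tail term $(Bt_0)^{1/(1-\beta)}$; one should also restrict attention to $\beta\in(0,1)$, where the exponents $1/\beta$ and $1/(1-\beta)$ appearing in $t_0$ and in \eqref{condition_new} are finite.
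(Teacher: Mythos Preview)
Your proposal is correct and follows essentially the same route as the paper: both arguments invoke the excess-risk identity $R(f_\eta)-R(f^*)=\E\big(|\xi(X)|\Ind[f_\eta(X)\neq f^*(X)]\big)$, truncate this expectation at the level $t_0$, bound the small part by $t_0\,P(f_\eta\neq f^*)=t_0\eta$ and the large part by $P(Z\ge t_0)\le B^{1/(1-\beta)}t_0^{1/(1-\beta)}$ via \eqref{condition_new}, and then substitute $t_0=B^{-1/\beta}\eta^{(1-\beta)/\beta}$. Your write-up is in fact slightly more explicit than the paper's, since you spell out that both contributions equal $B^{-1/\beta}\eta^{1/\beta}$ and that this rearranges to \eqref{strongercond}; your remark on restricting to $\beta\in(0,1)$ is also a sensible caveat that the paper leaves implicit.
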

\begin{remark}
Observe that due to our choice of $t_0$ and since $P(f_{\eta}(X) \neq f^*(X)) = \eta$ the condition \eqref{condition_new} does not contradict the condition $P(|\xi(X)| \le t) \lesssim t^{\frac{\beta}{1 - \beta}}$.
\end{remark}
\begin{proof}
We use
$R(f) - R(f^*) = \E(|\xi(X)|\Ind[f(X) \neq f^*])$. We have
\begin{align*}
R(f_\eta) - R(f^*) &= \E(|\xi(X)|\Ind[f_{\eta}(X) \neq f^*(X)]) 
\\
&\le t_0P(f(X) \neq f^*(X)) 
\\
&\quad\quad+ \E(|\xi(X)|\Ind(|\xi(X)|\Ind[f_{\eta}(X) \neq f^*(X)] > t_0))
\\
&\le t_0\eta + B^{\frac{1}{1-\beta}}t_0^{\frac{1}{1 - \beta}}.
\end{align*}
Using $t_0 = B^{\frac{-1}{\beta}}\eta^{\frac{1 - \beta}{\beta}}$ and $P(f_{\eta}(X) \neq f^*(X)) = \eta$ the claim follows.
\end{proof}

\section{Applications and comparisons}
\label{applications}
\subsubsection*{Classification under entropy conditions}
The next simple theorem gives an analysis of ERM under the local entropy with bracketing under condition \eqref{strong}.

In what follows we consider the loss $\ell$ bounded by $1$. Our results, however, can be extended to the unbounded losses, since the only concentration tool that will be used is a Bernstein inequality, and versions of it for the unbounded random variables (represented via the Orlicz norms or related moment conditions) will be sufficient for our purposes \cite{Adam08, Lugosi13, Lecue12}. 
\begin{proposition}
\label{main}
Assume that the loss function is bounded by $1$ and for the excess loss class $\mathcal{L}_{\mathcal{Y}}$ the $L_1$-Bernstein condition holds with parameters $B, \beta$. Then with probability at least $1 - \delta$ over the learning sample for any ERM~$\hat{f}$
\[
R(\hat{f}) - R(f^{*}) \lesssim \left(\gamma_{[\ ], L_1}\left(\mathcal{G}_{\mathcal{Y}}, \frac{B}{n}, \beta, B\right) + \frac{B\log(\frac{1}{\delta})}{n}\right)^{\frac{1}{2 - \beta}}.
\]
\end{proposition}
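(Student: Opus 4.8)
The plan is to reduce the proposition to Lemma~\ref{mainlemma} applied to the excess loss class $\mathcal{L}_{\mathcal{Y}}$, and then to transfer the resulting fixed point from $\mathcal{L}_{\mathcal{Y}}$ to the loss class $\mathcal{G}_{\mathcal{Y}}$ by a translation-invariance argument. First I would fix $c=1$ and, writing $g_{\hat f}(x,y) = \ell(\hat f(x),y) - \ell(f^*(x),y) \in \mathcal{L}_{\mathcal{Y}}$, use the defining property of ERM, namely $R_n(\hat f) \le R_n(f^*)$, equivalently $P_n g_{\hat f} \le 0$. Since $1+c>0$ this gives
\[
R(\hat f) - R(f^*) = P g_{\hat f} \;\le\; P g_{\hat f} - (1+c) P_n g_{\hat f} \;\le\; \sup_{g \in \mathcal{L}_{\mathcal{Y}}}\bigl(Pg - (1+c)P_n g\bigr),
\]
which is exactly the quantity controlled by Lemma~\ref{mainlemma}; note this is the same reduction as in the proof of Theorem~\ref{cor}, but now comparing directly with $f^*\in\F$, so no residual term of the form $R_n(f^*_\eta) - R_n(f^*)$ appears.

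Second, I would check that $\mathcal{L}_{\mathcal{Y}}$ satisfies the hypotheses of Lemma~\ref{mainlemma}: $0 \in \mathcal{L}_{\mathcal{Y}}$ because $f^* \in \F$; $Pg = R(f) - R(f^*) \ge 0$ for every $g \in \mathcal{L}_{\mathcal{Y}}$ because $f^*$ minimizes $R$; $\|g\|_\infty \le 1$ because $\ell$ takes values in $[0,1]$; and the $L_1$-Bernstein condition for $\mathcal{L}_{\mathcal{Y}}$ with parameters $B,\beta$ is assumed. Hence, with probability at least $1-\delta$,
\[
\sup_{g \in \mathcal{L}_{\mathcal{Y}}}\bigl(Pg - (1+c)P_n g\bigr) \lesssim \left(\gamma_{[\ ], L_1}\Bigl(\mathcal{L}_{\mathcal{Y}}, \tfrac{c'B}{n}, \beta, B\Bigr) + \frac{c'B\log(\frac{1}{\delta})}{n}\right)^{\frac{1}{2-\beta}},
\]
where $c' = 64(1+c)^2$ is a universal constant.

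Third, I would remove the two cosmetic discrepancies with the stated bound. The excess loss class is the translate $\mathcal{L}_{\mathcal{Y}} = \mathcal{G}_{\mathcal{Y}} - \ell(f^*(\cdot),\cdot)$ of the loss class by a fixed measurable function. Translation by a fixed function is an $L_1(P)$ isometry, maps $L_1$-balls onto $L_1$-balls, and preserves the pointwise order used to define brackets; since $\mathcal{D}^{\text{loc}}_{[\ ], L_1}(\G,\cdot)$ is built from a supremum, over all centers $g \in \G$, of bracketing covering numbers of the localized sets $\G \cap \mathcal{B}_{L_1}(g,\cdot)$, it is invariant under such a translation. Therefore $\mathcal{D}^{\text{loc}}_{[\ ], L_1}(\mathcal{L}_{\mathcal{Y}},\cdot) = \mathcal{D}^{\text{loc}}_{[\ ], L_1}(\mathcal{G}_{\mathcal{Y}},\cdot)$, and consequently $\gamma_{[\ ], L_1}(\mathcal{L}_{\mathcal{Y}}, k, \beta, B) = \gamma_{[\ ], L_1}(\mathcal{G}_{\mathcal{Y}}, k, \beta, B)$ for all $k$. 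Finally, since $\varepsilon \mapsto \mathcal{D}^{\text{loc}}_{[\ ], L_1}(\G, \varepsilon^{1/(2-\beta)}, \beta, B)$ is nonincreasing, any $\varepsilon$ with $\tfrac{B}{n}\mathcal{D}^{\text{loc}}_{[\ ], L_1}(\G,\varepsilon^{1/(2-\beta)},\beta,B) \le \varepsilon$ makes $c'\varepsilon$ satisfy the analogous inequality with $\tfrac{B}{n}$ replaced by $\tfrac{c'B}{n}$; taking infima gives $\gamma_{[\ ], L_1}(\mathcal{G}_{\mathcal{Y}}, c'B/n,\beta,B) \le c'\gamma_{[\ ], L_1}(\mathcal{G}_{\mathcal{Y}}, B/n,\beta,B)$. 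Plugging these in, using $c'\ge 1$ and $\tfrac{1}{2-\beta}\le 1$, and absorbing $c'$ into the constant implicit in $\lesssim$ yields the claimed bound.

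The only genuinely delicate point is the translation-invariance of the bracketing local entropy together with the accompanying fixed-point rescaling: one has to be sure that the order structure defining brackets, the balls used for localization, and the supremum over centers are all compatible with shifting the class by a fixed function. Once that is granted, everything reduces to the uniform relative deviation estimate already established in Lemma~\ref{mainlemma}.
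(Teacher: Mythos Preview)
Your proposal is correct and follows essentially the same approach as the paper: reduce to Lemma~\ref{mainlemma} via $P_n\hat g\le 0$, then identify the bracketing local entropy of $\mathcal{L}_{\mathcal{Y}}$ with that of $\mathcal{G}_{\mathcal{Y}}$. You are in fact more careful than the paper, which simply asserts that ``metric properties of $\mathcal{L}_{\mathcal{Y}}$ are the same as the properties of $\mathcal{G}_{\mathcal{Y}}$'' and absorbs the constant $c'$ silently into $\lesssim$, whereas you spell out the translation-invariance of brackets and the fixed-point rescaling explicitly.
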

\begin{proof}[of Proposition \ref{main}]
With Lemma \ref{mainlemma} the proof is rather straightforward. Given an empirical risk minimizer $\hat{f}$ denote the corresponding function in $\mathcal{L}_{\mathcal{Y}}$ as $\hat{g}$. We have $P\hat{g} = R(\hat{f}) - R(f^*)$ and $P_{n}\hat{g} \le 0$. Then for any $c > 0$ (namely we may take $c = 1$) we have
\[
P\hat{g} \le P\hat{g} - (1 + c)P_{n}\hat{g} \le \sup\limits_{g \in \mathcal{L}_{\mathcal{Y}}}(Pg - (1 + c)P_{n}g)
\]
The final step will be to understand that metric properties of $\mathcal{L}_{\Y}$ are the same as
the properties of $\G_{\Y}$. That means, for example, that $\gamma_{[\ ]}\left(\mathcal{L}_{\mathcal{Y}}, \frac{B}{n}, \beta, B\right) = \gamma_{[\ ]}\left(\mathcal{G}_{\mathcal{Y}}, \frac{B}{n}, \beta, B\right)$. Lemma \ref{mainlemma} for $\mathcal{L}_{\mathcal{Y}}$ finishes the proof.
\end{proof}
Notice that our bound does not involve convexity or star-shapedness assumptions compared with related techniques in the literature \cite{Bartlett05, Bartlett06}.  Taking star-hulls may be harmful if one wants to prove the optimal rates for certain small classes (see discussions in \cite{Zhivotovskiy16} and related Lemma $11$ in \cite{Liang15}). Moreover, since we do not use the symmetrization step our bound is fully distribution dependent, which will be also important in our examples. 

It is interesting to compare our results with the well-known upper bound of Tsybakov \cite{Tsybakov04}. He proves that for the binary classification problem under the condition
$\log(\mathcal{N}_{[\ ]}(\G_{\Y}, \varepsilon)) \lesssim \varepsilon^{-r}$ for $\varepsilon \in [0, 1)$  and $r > 0$ and the $L_1$-Bernstein condition with parameters $\beta, B$ there is a classification algorithm (with an output denoted by $\tilde{f}$) such that with probability at least $1 - \delta$
\begin{equation}
\label{tsybakov}
R(\tilde{f}) - R(f^*) \lesssim \left(\left(\frac{1}{n}\right)^{\frac{2 - \beta}{2 - \beta + \beta r}} + \frac{\log(\frac{1}{\delta})}{n}\right)^{\frac{1}{2 - \beta}}
\end{equation}
The natural question is whether it is possible to weaken the strong bracketing assumption.
Observe that under a significantly milder assumption $\log(\mathcal{N}(\G_{\Y}, \varepsilon)) \lesssim \varepsilon^{-r}$ for $\varepsilon \in [0, 1)$ and $r > 0$ under $L_1$-Bernstein condition the bound of the Theorem \ref{cor} gives the excess risk bound which is only slightly worse in terms of the learning rates
$
R(\hat{f}_{\eta}) - R(f^*) \le \left(\left(\frac{1}{n}\right)^{\frac{2 - \beta}{2 - \beta + r}} + \frac{\log(\frac{1}{\delta})}{n}\right)^{\frac{1}{2 - \beta}},
$
where we skipped the dependence on $B$ to maintain the same form as in \cite{Tsybakov04}. Moreover, Theorem \ref{cor} guarantees that the same result is valid also if the local bracketing numbers are of order $\varepsilon^{-r}$. Interestingly, that using Theorem $6$ in \cite{Massart06} for the case $\beta = 1$ and the binary loss it is simple to prove the lower bound in terms of the local entropy without bracketing, that is valid \emph{for any class} with the same local entropy (however the dependence on $B$ will be slightly suboptimal). We demonstrate related techniques for a similar problem in Proposition \ref{prop} below.  We should also note that under conditions \eqref{strongercond} and again the covering numbers without bracketing $\log(\mathcal{N}(\G_{\Y}, \varepsilon)) \lesssim \varepsilon^{-r}$ the bound \eqref{superbound} of Theorem \ref{mainbound} recovers exactly the same learning rate $\left(\left(\frac{1}{n}\right)^{\frac{2 - \beta}{2 - \beta + \beta r}} + \frac{\log(\frac{1}{\delta})}{n}\right)^{\frac{1}{2 - \beta}}$ as given by \eqref{tsybakov}.

It should be noted that if $\F$ has finite VC dimension $d$ then it is well known that for an arbitrary distribution it holds $\log(\mathcal{N}(\G_{\Y}, \varepsilon)) \lesssim d\log(\frac{1}{\varepsilon})$ and thus, the convergence rates given by Theorem \ref{cor} or Theorem \ref{mainbound} become significantly better compared to \eqref{tsybakov}. The condition $\log(\mathcal{N}(\G_{\Y}, \varepsilon)) \lesssim \varepsilon^{-r}$ is interesting only for particular distributions and classes of infinite VC dimension. At the same time to the best of our knowledge no known result implies bounds for $\log(\mathcal{N}_{[\ ]}(\G_{\Y}, \varepsilon))$ given that the VC dimension of $\F$ is finite. Therefore, in the natural case, when $\F$ has finite VC dimension the original bound \eqref{tsybakov} can be significantly suboptimal.
\subsubsection*{Homogeneous halfspaces under isotropic log-concave distributions}
This example took a lot of attention in the literature (see \cite{Bshouty09, Balcan13, Long95, Hanneke15a} and reference therein) and is one of our motivations to consider distribution dependent complexity measures.
It follows directly from the result of Hanneke (see section $5.1$ in \cite{Hanneke15a}) which is based on recent results of Balcan and Long \cite{Balcan13} that for the class $\F$ of homogeneous halfspaces (passing through the origin) in $\mathbb{R}^{d}$ under zero mean isotropic log-concave distributions of $X$ it holds $\mathcal{D}^{\text{loc}}(\G_{\Y}, \varepsilon, 1, 1) \lesssim d$ under the binary loss. Here we also used formula \ref{dist} to relate the loss class to the initial class $\F$.
Therefore, Theorem \ref{mainbound} gives under the $L_1$-Bernstein condition and the condition $\eqref{strongercond}$ a rate 
\begin{equation}
\label{logconc}
R(\hat{f}_{\eta}) - R(f^*) \lesssim \left(\frac{Bd}{n} + \frac{B\log(\frac{1}{\delta})}{n}\right)^{\frac{1}{2 - \beta}}.
\end{equation}
Previously, for ERM over the net this result was provided \cite{Bshouty09} only in the simplest realizable case (in particular, for this case $\beta = B = 1$ and the condition \eqref{strongercond} holds) and the learning rate of \ref{logconc} is strongly better than the rate\footnote{Interestingly, that this previous bound is based on a different technique and obtained via a different $P_{X}$-dependent learning algorithm (taking its roots in the theory of active learning) and the condition \eqref{strongercond} is not required there.} implied by the recent Theorem $19$ in \cite{Hanneke15a} under general $B$ and $\beta$, which itself is the best known bound so far. Moreover, in case when $\beta = 1$ our bound is $R(\hat{f}_{\eta}) - R(f^*) \lesssim \frac{Bd}{n} + \frac{B\log(\frac{1}{\delta})}{n}$ and we prove a $B$ dependent matching lower bound, showing that the complexity term $\frac{Bd}{n}$ can not be avoided (previously the lower bound was provided only in the realizable case \cite{Balcan13, Long95}). The important component of the analysis is that the proof of the lower bound is based on the \emph{local entropy}. So, for this specific problem the learning rates are \emph{fully determined} by this complexity measure. 
\begin{proposition}[Lower bound for log-concave distributions]
\label{prop}
\label{lower} Consider the problem of learning the class $\F$ of homogeneous halfspaces in $\mathbb{R}^{d}$ with the binary loss.
Let $\tilde{f}$ be an output of any learning algorithm. Then for any $B \le \sqrt{\frac{n}{d}}$ there exists a distribution $P_{X, Y}$, such that the excess loss class $\mathcal{L}_{\Y}$ is $(1, B)$-Bernstein, $P_{X}$ is a zero mean isotropic log-concave distribution and 
\[
\E(R(\tilde{f}) - R(f^*)) \gtrsim \frac{Bd}{n}.
\]
\end{proposition}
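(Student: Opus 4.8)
The plan is a classical Fano-type minimax argument; its only non-routine feature is that the noise level, the Bernstein constant $B$, and the scale of a packing of homogeneous halfspaces have to be balanced against one another. First I would fix the marginal to be the standard Gaussian $P_X = N(0, I_d)$ on $\mathbb{R}^d$: it is zero-mean, isotropic and log-concave, and, being rotation-invariant, it satisfies $P_X(\sign\langle u, X\rangle \neq \sign\langle v, X\rangle) = \angle(u,v)/\pi$ for all unit vectors $u,v$. To each unit vector $w$ I attach the joint law $P_w$ with this marginal and conditional mean $\E[Y \mid X = x] = \tfrac{1}{B}\sign\langle w, x\rangle$ (that is, $Y = \sign\langle w, x\rangle$ with probability $\tfrac12(1 + 1/B)$). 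Then the Bayes rule is the homogeneous halfspace $f_w(x) = \sign\langle w, x\rangle \in \F$, so $f^* = f_w$, and since $|\xi| \equiv 1/B$ we get $R(f) - R(f^*) = \tfrac1B\, P_X(f \neq f_w)$ for every measurable $f$. For the associated $g \in \mathcal{L}_\Y$ this gives $Pg^2 = P_X(f \neq f_w)$ and $Pg = \tfrac1B P_X(f \neq f_w) \ge 0$, hence $Pg^2 = B\,(Pg)^1$, i.e.\ the excess loss class is $(1,B)$-Bernstein, exactly as required. (The case $B = 1$ is the realizable case and reduces to the classical VC lower bound $\gtrsim d/n$; for $B$ in any bounded range the target rate is $\simeq d/n$ and the argument below is unchanged, so I assume $B \ge 2$ henceforth.)

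Second, I would produce the packing. A standard volumetric estimate on the sphere provides universal constants $c, C, \theta_* > 0$ such that for every $\theta \le \theta_*$ there are unit vectors $w_1, \dots, w_N$ with $N \ge e^{cd}$ whose pairwise angles all lie in $[\theta, C\theta]$ (take a maximal $\theta$-separated subset of a spherical cap of angular radius $\asymp \theta$; maximality forces it to be a net, which lower-bounds $N$). By the Gaussian identity, $P_X(f_{w_i} \neq f_{w_j}) \in [\theta/\pi, C\theta/\pi]$; this is exactly the structure witnessing that the local entropy $\mathcal{D}^{\text{loc}}(\G_\Y, \cdot, 1, 1)$ of homogeneous halfspaces under an isotropic log-concave marginal is $\gtrsim d$ at small scales, matching the upper bound $\lesssim d$ quoted before the proposition --- this is the sense in which the rate is governed by the local entropy. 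The scale $\theta$ is fixed in the last step.

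Third, Fano. Let $J$ be uniform on $\{1,\dots,N\}$ and draw the sample i.i.d.\ from $P_{w_J}$. Since the $P_{w_i}$ share the marginal $P_X$ and their conditionals differ only where $f_{w_i} \neq f_{w_j}$, we have $\mathrm{KL}(P_{w_i} \| P_{w_j}) = P_X(f_{w_i} \neq f_{w_j}) \cdot \kappa(1/B)$, where $\kappa(h) := \mathrm{KL}(\mathrm{Ber}(\tfrac{1+h}{2}) \| \mathrm{Ber}(\tfrac{1-h}{2})) = h\log\tfrac{1+h}{1-h}$ and $\kappa(1/B) \simeq 1/B^2$ since $B \ge 2$; hence $\mathrm{KL}(P_{w_i}^{\otimes n} \| P_{w_j}^{\otimes n}) \lesssim n\theta/B^2$. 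I would then set $\theta \simeq \min\{\theta_*,\, B^2 d/n\}$, so that this KL is at most $\tfrac14 \log N$; Fano's inequality yields $\mathbb{P}(\widehat{J} \neq J) \ge \tfrac12$ for the decoder $\widehat{J} = \argmin_j P_X(\tilde f \neq f_{w_j})$, and the $\theta/\pi$-separation of the $f_{w_j}$ turns $\{\widehat{J} \neq J\}$ into $\{P_X(\tilde f \neq f_{w_J}) \ge \theta/(2\pi)\}$. Averaging over $J$ and passing to the worst index, $\E_{P_{w_j}^{\otimes n}}[R(\tilde f) - R(f^*)] = \tfrac1B \E[P_X(\tilde f \neq f_{w_j})] \gtrsim \tfrac{\theta}{B}$. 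In the generic regime $\theta \asymp B^2 d/n$ this is $\simeq \tfrac{Bd}{n}$; in the complementary regime $\theta = \theta_*$, the hypothesis $B \le \sqrt{n/d}$ gives $\tfrac1B \ge \tfrac{Bd}{n}$, so $\tfrac{\theta}{B} \gtrsim \tfrac{Bd}{n}$ as well. (Alternatively, the packing can be fed directly into Theorem~6 of \cite{Massart06}.)

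The main obstacle is the packing input of the second step: one needs an exponential family of homogeneous halfspaces whose pairwise $P_X$-disagreements are controlled from \emph{both} sides at a prescribed small scale, under a genuinely isotropic log-concave measure --- a sharp lower bound on the local entropy of halfspaces. Choosing $P_X$ Gaussian reduces this to the classical fact that $S^{d-1}$ carries $e^{\Omega(d)}$ points with pairwise angles confined to $[\theta, C\theta]$; for a general isotropic log-concave $P_X$ one would instead invoke the two-sided comparison $P_X(f_u \neq f_v) \simeq \angle(u,v)$, as in the constructions of \cite{Long95, Balcan13}. The remaining care is purely arithmetic: making the three scales --- noise margin $1/B$, Bernstein parameter $B$, packing radius $\theta$ --- interlock so that the Fano condition $n\cdot \mathrm{KL}_{\max} \lesssim \log N$ becomes precisely $B \lesssim \sqrt{n/d}$.
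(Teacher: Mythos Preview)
Your proposal is correct and follows essentially the same route as the paper: fix an isotropic log-concave marginal, impose Massart noise with margin $h = 1/B$ so that the excess loss class is $(1,B)$-Bernstein, exhibit an $e^{cd}$-sized packing of homogeneous halfspaces at a tunable $L_1(P_X)$ scale, and apply a multiple-hypothesis lower bound with the scale chosen so that the information condition just holds. The differences are purely executional. The paper takes $P_X$ uniform on the unit ball rather than Gaussian and obtains the packing \emph{indirectly}---arguing that the local entropy must be $\gtrsim d$ because otherwise the matching upper bound in the paper would contradict the known realizable lower bound of \cite{Long95}, and then using rotational symmetry to propagate this to every scale and center---whereas you build the packing explicitly via a volumetric cap argument on $S^{d-1}$. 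The paper then invokes Theorem~6 of \cite{Massart06} (Birg\'e's lemma) as a black box, which you mention as an alternative; your direct Fano computation with $\mathrm{KL} \lesssim n\theta/B^2$ is the same mechanism unpacked. Your explicit construction is arguably cleaner and more self-contained; the paper's indirect argument has the rhetorical advantage of making the local-entropy interpretation of the lower bound manifest, reinforcing the paper's thesis that this complexity measure governs both directions.
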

\begin{proof} We mentioned that the lower bounds, based on local entropies are well known. Thus, one may simply use standard techniques from the literature. Our proof is based on the proof of Theorem $6$ in Massart and N\'ed\'elec \cite{Massart06}, which is based itself on the application of Birge's Lemma. Let $h \in [0, 1]$.
We assume that $X$ has a uniform distribution on the unit ball, which is a zero mean isotropic log-concave distribution \cite{Balcan13}. Given $f \in \F$ the distribution of $Y|X$ will be defined as follows:
$
P^{f}_{Y = 1|X} = \frac{1 + f(X)h}{2}.
$
It is known \cite{Massart06} that for this particular distribution of $Y|X$ the class $\mathcal{L}_{\Y}$ is $(1, \frac{1}{h})$-Bernstein for any choice of $P_{X}$, moreover the problem is well-specified, which means $\sign(\E[Y| X]) \in \F$. This will in particular mean that $R(\tilde{f}) - R(f^{*}) \ge 0$.
Given $\varepsilon = \varepsilon_{d} \in [0, 1]$ we want to construct a set $\F' \subset \F$, such that this set is a $\varepsilon$-packing of $\F$ intersected with the $L_{1}(P)$ ball or radius $2\varepsilon$. 

It holds $\sup\limits_{\varepsilon \in [0, 1]}\sup\limits_{f \in \F}\log\left(\mathcal{N}(\F \cap \mathcal{B}_{L_1}(f, 2\varepsilon), \varepsilon)\right) \gtrsim d$, since otherwise in the realizable case our upper bound will contradict the lower bound \cite{Long95}. However, from the symmetry of the unit ball and the fact that we have a uniform distribution it easily follows that for any $\varepsilon \in [0, 1]$ and any $f \in \F$ it holds $\log\left(\mathcal{N}(\F \cap \mathcal{B}_{L_1}(f, 2\varepsilon), \varepsilon)\right) \gtrsim d$. Following the lines of Theorem $6$ we have that if  $\varepsilon \in [0, 1]$ and $8n\frac{h^2}{1 - h}\varepsilon \le 0.71\log(|\F'|)$, then $\inf\limits_{\tilde{f}}\sup\limits_{f^* \in \F}\E(R(\tilde{f}) - R(f^*)) \ge \frac{0.29\varepsilon h}{4}$. But since $\log(|\F'|) \gtrsim d$ choosing $\varepsilon \simeq \frac{d(1 - h)}{nh^2}$ we have $\inf\limits_{\tilde{f}}\sup\limits_{f^* \in \F}\E(R(\tilde{f}) - R(f^*)) \gtrsim \frac{d(1 - h)}{nh}$, provided that $\varepsilon \le 1$. The last condition holds if $h \gtrsim \sqrt{\frac{d}{n}}$. Combination of this bound with the lower bound $\frac{d}{n}$ for the realizable case \cite{Long95} gives the bound of order $\frac{d}{nh}$. Finally, we set $h = \frac{1}{B}$.
\end{proof}
\subsubsection*{Non-exact oracle inequalities in aggregation theory} 
The following example is an instructive corollary of our results. The non-exact oracle inequalities are the upper bounds on $R(\hat{f}) - (1 + a)R(f^*)$ for some $a > 0$. It is known, that this $1 + a$ term instead of $1$ allows one to obtain under mild conditions the same rates as if the Bernstein condition for $\mathcal{L}_{\Y}$ holds true. As noted by Lecu\'e \cite{Lecue11} shifted processes are sufficient for proving this kind of results. Via simple calculations, one can prove (see \cite{Lecue11}) that for any ERM $\hat{f}$ it holds
$
R(\hat{f}) - (1 + 2c)R(f^*) \le \sup\limits_{g \in \G_{\Y}}\left(Pg - (1 + c)P_{n}g\right) + (1 + c)\sup\limits_{g \in \G_{\Y}}\left(P_{n}g - \frac{1 + 2c}{1 + c}Pg\right), 
$
where as before $\G_{\Y}$ is a loss class. However, our approach to bound these processes is different: namely using our Lemma \ref{mainlemma} and an easily obtainable generalization of this bound for $\sup\limits_{g \in \G_{\Y}}\left(P_{n}g - \frac{1 + 2c}{1 + c}Pg\right)$. The key point here is that to apply this Lemma we need a condition \eqref{strong} not for the excess loss class $\mathcal{L}_{\Y}$ but for the loss class $\G_{\Y}$ which holds trivially for bounded losses. Thus, for any loss bounded by $1$ it holds (for example for $a = 1$) with probability at least $1 - \delta$ that for any ERM $\hat{f}$: 
\begin{equation}
R(\hat{f}) - 2R(f^*) \lesssim \gamma_{[\ ]}\left(\mathcal{G}_{\mathcal{Y}} \cup \{0\}, \frac{1}{n}, 1, 1\right) + \frac{\log(\frac{1}{\delta})}{n}.
\end{equation}
An instructive case is when $R(f^*)$ is small and is of an order of the right hand side. In this case we have the same guaranties on the excess risk of ERM as if the condition \eqref{strong} with parameters $B = 1, \beta = 1$ holds true for $\mathcal{L}_{\Y}$. Previously in the literature special aggregation procedures were used to obtain related bounds in this regime (see, for example, Theorem $5$ in \cite{Rakhlin13}).

\subsubsection*{Local $L_{2}(P)$ entropies in well-specified regression models}
So far we discussed only $L_{1}(P)$ entropies. However, for nonpaprametric classes and the square loss the analysis is usually performed under $L_{2}(P)$.  At first we simply adapt our notation to $L_{2}(P)$ case. We use a short notation 
\[
\mathcal{D}_{L_{2}}^{\text{loc}}(\G, \varepsilon) = \sup\limits_{\gamma \ge \varepsilon}\sup\limits_{g \in \G}\log\left(\mathcal{N}_{L_{2}}(\G \cap \mathcal{B}_{L_2}(g, 2\gamma), \gamma)\right),
\]
where $\mathcal{N}_{L_{2}}$ denotes the covering number with respect to $L_{2}(P)$ norm. Finally, 
\begin{equation*}
\zeta(\G, k) = \inf \{\varepsilon > 0: k\mathcal{D}_{L_{2}}^{\text{loc}}\left(\G, \varepsilon\right) \le \varepsilon^2\}
\end{equation*}
\begin{proposition}
\label{wellspec}
Consider the well-specified bounded regression model with the square loss as defined above. Given $\eta \in [0, 1]$ we choose $f_{1}, \ldots, f_{N_{\eta}} \in  \F$ that form a minimal $\eta$-cover of $\F$ with respect to $L_{2}(P)$. Define $\hat{f}_{\eta} = \argmin\limits_{f \in \{f_{1}, \ldots, f_{N_{\eta}}\}}R_{n}(f)$. Then if $\eta \simeq \zeta\left(\F, \frac{1}{n}\right) + \sqrt{\frac{\log(\frac{1}{\delta})}{n}}$, then with probability at least $1 - \delta$ it holds
\[
R(\hat{f}_{\eta}) - R(f^*) \lesssim \left(\zeta\left(\F, \frac{1}{n}\right)\right)^{2} + \frac{\log(\frac{1}{\delta})}{n}.
\]
\end{proposition}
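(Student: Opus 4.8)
The plan is to mimic the structure of the proof of Theorem \ref{cor}, but carried out in $L_2(P)$ with the square loss, exploiting the key identity $R(f) - R(f^*) = \|f - f^*\|^2_{L_2}$ established above for the well-specified zero-mean-noise model. First I would set up the excess loss class $\mathcal{L}_{\Y} = \{(x,y) \to (f(x)-y)^2 - (f^*(x)-y)^2 : f \in \F\}$ and record the two facts that make this model behave like a $(1,1)$-Bernstein situation: (i) $Pg = \|f - f^*\|^2_{L_2}$ for the $g \in \mathcal{L}_{\Y}$ corresponding to $f$, and (ii) because the noise is zero-mean and independent and everything is bounded, one has $Pg^2 \lesssim \|f - f^*\|^2_{L_2} = Pg$, i.e. the variance of the excess loss is controlled by its mean. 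This is exactly the $L_1$-Bernstein/Bernstein condition with $\beta = 1$, $B$ an absolute constant, except that the natural metric here is $L_2(P)$ on $\F$ rather than $L_1(P)$ on $\mathcal{L}_{\Y}$: indeed $\|g - g'\|$ for the excess losses is, after expanding the squares and using boundedness plus the zero-mean noise, comparable to $\|f - f'\|_{L_2(P)}$.

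Next I would run the shifted-process argument exactly as in Lemma \ref{mainlemma}/Theorem \ref{cor}. Fix $\eta$, take the minimal $\eta$-cover $f_1,\dots,f_{N_\eta}$ of $\F$ in $L_2(P)$, and define $\hat f_\eta$ and $f^*_\eta = \argmin_{f \in \{f_i\}} R(f)$. Since $R_n(\hat f_\eta) - R_n(f^*_\eta) \le 0$, write
\[
R(\hat f_\eta) - R(f^*) \le \sup_{g \in \{g_1,\dots,g_{N_\eta}\}}\big(Pg - (1+c)P_n g\big) + (1+c)\big(R_n(f^*_\eta) - R_n(f^*)\big),
\]
exactly as in the proof of Theorem \ref{cor}. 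The second term is handled by a single Bernstein inequality using $R(f^*_\eta) - R(f^*) = \|f^*_\eta - f^*\|^2_{L_2} \le \eta^2$ (the square is the only change from the $L_1$ case, and it is why the fixed point is defined with $\varepsilon^2$ on the right-hand side): with probability $1 - \delta/2$ this term is $\lesssim \eta^2 + \frac{\log(1/\delta)}{n}$. For the supremum over the net I would apply Bernstein to each of the $N_\eta \le \exp(\mathcal{D}^{\text{loc}}_{L_2}(\F,\eta))$ functions; using the variance bound $Pg^2 \lesssim Pg$ and the localization (the relevant $g$ in the net satisfy $Pg = \|f_i - f^*\|^2_{L_2} \lesssim \eta^2$ after the usual split into a "large $Pg$" part that contributes a negative term and a "small $Pg$" part controlled by a union bound), one gets that the supremum is $\lesssim \eta^2$ with probability $1 - \delta/2$ provided $n \gtrsim \frac{\mathcal{D}^{\text{loc}}_{L_2}(\F,\eta)}{\eta^2} + \frac{\log(1/\delta)}{\eta^2}$. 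Unwinding the definition of $\zeta(\F, 1/n)$ and choosing $\eta \simeq \zeta(\F, 1/n) + \sqrt{\log(1/\delta)/n}$ makes this condition hold and gives $R(\hat f_\eta) - R(f^*) \lesssim \eta^2 \lesssim (\zeta(\F,1/n))^2 + \frac{\log(1/\delta)}{n}$, which is the claim.

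The main obstacle I anticipate is the two-sided control relating $L_2(P)$ distances on $\F$ to the $L_2(P)$ and $L_1(P)$ behavior of the excess losses, and in particular checking that the localization ball $\mathcal{B}_{L_2}(g, 2\gamma)$ appearing in $\mathcal{D}^{\text{loc}}_{L_2}$ really captures the functions that matter. Concretely: after the contraction-type step one works with $\|f_i - f^*\|_{L_2}$, but $f^*$ need not lie in the net, so one must pass to $f^*_\eta$ and absorb the discrepancy — this is the analogue of the "almost minimal covering" technical point in Lemma \ref{bound}, and I would handle it the same way, noting that one may replace $\mathcal{D}^{\text{loc}}_{L_2}(\F,\eta)$ by $\mathcal{D}^{\text{loc}}_{L_2}(\F,\eta/2)$ at the cost of an absolute constant. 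A secondary point is verifying that the factor $2\gamma$ (rather than $2B\gamma^\beta$) in the definition of $\mathcal{D}^{\text{loc}}_{L_2}$ is the correct localization radius here; this follows because $\beta = 1$ and $B$ is absorbed into the universal constants, together with the identity $Pg = \|f-f^*\|^2_{L_2}$ which ties the radius of the excess-loss ball directly to the $L_2$ radius on $\F$. Everything else is a routine transcription of the arguments already given for Theorem \ref{cor}.
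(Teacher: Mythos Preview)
Your proposal is correct and follows essentially the same route as the paper's own proof: the paper also records $Pg = \|f-f^*\|_{L_2}^2$, the one-sided Lipschitz bound $\|g_1-g_2\|_{L_2(P)}\le 2\|f_1-f_2\|_{L_2(P)}$, and the Bernstein condition $Pg^2\le 4Pg$, then repeats the decomposition of Theorem~\ref{cor} and the peeling argument of Lemma~\ref{mainlemma} in $L_2(P)$, splitting the net into $\G_0=\G_\eta\cap\mathcal{B}_{L_2}(0,4\eta)$ and its complement. One small simplification relative to your plan: you do not need genuinely two-sided control between $\|g-g'\|_{L_2}$ and $\|f-f'\|_{L_2}$ --- the single inequality $\|g_1-g_2\|_{L_2(P)}\le 2\|f_1-f_2\|_{L_2(P)}$ (which follows directly from boundedness, no use of the zero-mean noise) is enough both to transport the $\eta$-cover of $\F$ to a $2\eta$-cover of the excess-loss class and to bound the cardinalities of the peeling shells by the corresponding $L_2$-local entropy of $\F$.
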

Previously realted rates were obtained via global empirical entropies in \cite{Rakhlin13} using so-called \emph{skeleton aggregation} or \emph{aggregation of leaders} procedures. However, in our special case a simpler proper (taking its values in $\F$) procedure is used and our complexity measure is localized. We also note that almost the same $L_{2}(P)$-based local entropy (see Theorem $4.5$ in \cite{Mendelson15}) appears in general minimax lower bound in the unbounded case for convex $\F$. Finally, our result (still special to this well-specified case) is valid even for  very expressive nonparametric classes with $\log(\mathcal{N}_{L_{2}}(\F, \varepsilon)) \simeq \varepsilon^{-r}$, for $r > 2$ and does not require the convexity or the star-shapedness of $\F$ or $\mathcal{L}_{\Y}$.

\begin{proof}[of Proposition \ref{wellspec}]
It is known, that in our case for $g \in \G_{Y}$ it holds $Pg = \|f - f^*\|^2_{L_{2}(P)}$ (see, e.g., \cite{Rakhlin13}), where $f$ is the function corresponding to $g$. For every $g_{1}, g_{2} \in \G_{Y}$ it holds 
\begin{align*}
\|g_1 - g_2\|_{L_{2}(P)} &= \sqrt{P((f_1(X) - Y)^2 - (f_2(X) - Y)^2)^2} 
\\
&\le 2\sqrt{P((f_1(X) - f_2(X))^2} 
\\
&= 2\|f_1 - f_2\|_{L_{2}(P)}.
\end{align*}
We repeat the lines and use the same notation as in Theorem \ref{cor}.
We have 
\begin{align*}
&R(\hat{f}_{\eta}) - R(f^*) 
\\
&\le \sup\limits_{g \in \{g_{1}, \ldots, g_{N_{\eta}}\}}\left(Pg - (1 + c)P_{n}g\right) + (1 + c)(R_{n}(f^*_{\eta}) - R_{n}(f^*)),
\end{align*}
Notice that for $g \in \mathcal{L}_{\Y}$ the Bernstein condition holds since $Pg^2 = P((f(X) - Y)^2 - (f^*(X) - Y)^2)^2 \le 4P(f(X) - f^{*}(X))^2 = 4Pg$. Next we understand that $g_{1}, \ldots, g_{N_{\eta}}$ form a $2\eta$-cover of $\mathcal{L}_{\Y}$ (since it holds $\|g_1 - g_2\|_{L_{2}(P)} \le 2\|f_1 - f_2\|_{L_{2}(P)}$). Now we repeat the lines of the proof of Lemma \ref{mainlemma}, but now with respect to $L_2(P)$ distance. In what follows we emphasize only the differences compared to the proof of Lemma \ref{mainlemma}. We define $\G_{\eta} = \{0, g_{1}, \ldots, g_{N_{\eta}}\}$, 
$\G_{0} = \G_{\eta} \cap \mathcal{B}_{L_{2}}(0, 4\eta)$ and $\G_{1} = \{0\} \cup(\G_{\eta} \setminus \mathcal{B}_{L_{2}}(0, 4\eta))$. By adding the zero function (which corresponds to $f^{*} \in \F$) our covering numbers are changing by a small constant factor. Now
\begin{align*}
\sup\limits_{g \in \mathcal{G}_{\eta}}(Pp[g] - (1+c)P_{n}p[g]) &\le \sup\limits_{g \in \G_{0}}(Pg - (1 + c)P_{n}g) 
+ \sup\limits_{g \in \G_{1}}(Pg - (1 + c)P_{n}g).
\end{align*}
As before we analyze these terms one by one. Using Bernstein inequality we have that for $g \in \G_{\eta}$ it holds
$
P\left(Pg - P_{n}g > \frac{cPg}{1 + c}\right) \le \exp\left(-\frac{nc^2Pg}{32(1 + c)^2}\right).
$
However, since $Pg^2 \le 4Pg$ we have under $\sqrt{Pg^2} \ge 2^j\eta$
\[
P\left(Pg - P_{n}g > \frac{cPg}{1 + c}\right) \le \exp\left(-\frac{nc^{2}2^{2j}\eta^2}{128(1 + c)^2}\right).
\]
Next we have to control the size of the subset of $\G_{\eta}$ consisting of functions with $2^j\eta \le \sqrt{Pg^2} \le 2^{j + 1}\eta$. Using our relation $\|g_1 - g_2\|_{L_{2}(P)} \le 2\|f_1 - f_2\|_{L_{2}(P)}$ it is straightforward to show that we may control the sizes of these sets by the corresponding subsets of $\F - f^{*}$. Thus, as is in the proof of Lemma \ref{mainlemma} we show that for a fixed $c$ if $n \gtrsim \frac{\mathcal{D}_{L_{2}}^{\text{loc}}\left(\F, \eta\right)}{\eta^2} + \frac{\log(\frac{1}{\delta})}{\eta^2}$ we have with probability at least $1 - \delta/3$ that $\sup\limits_{g \in \G_{1}}(Pg - (1 + c)P_{n}g) = 0$. Finally, we analyze $\sup\limits_{g \in \G_{0}}(Pg - (1 + c)P_{n}g)$. And as before, we have $P(Pg - P_{n}g \ge \eta^2) \le \exp\left(-\frac{nc\eta^2}{32(1 + c)^2}\right)$. And given $n \gtrsim \frac{\mathcal{D}_{L_{2}}^{\text{loc}}\left(\F, \eta\right)}{\eta^2} + \frac{\log(\frac{1}{\delta})}{\eta^2}$ we have with probability at least $1 - \delta/3$ that $\sup\limits_{g \in \G_{1}}(Pg - (1 + c)P_{n}g) \le \eta^2$. Finally, since $ 0 \le R(f^*_{\eta}) - R(f^*) \le \eta^2 \le 1$
\[
P(R_{n}(f^*_{\eta}) - R_{n}(f^*) \ge R(f^*_{\eta}) - R(f^*) + \eta^2)
\le \exp\left(-\frac{n\eta^{2}}{16}\right)
\]
and given $n \gtrsim \frac{\log(\frac{1}{\delta})}{\eta^2}$ the last probability is upper bounded by $\delta/3$. The proof finishes as before.
\end{proof}

\section{Stability and sample compression schemes}
\label{comp}

Nevertheless, the approach based on the analysis of ERM and related learning algorithms is not a panacea. In some cases, related techniques will not provide tight learning rates. One of the simplest examples is the classification when no assumptions are made about the noise (see the related discussions in \cite{Zhivotovskiy16}). Another simple model is the realizable case classification, defined above. The optimal risk bound in this case is known to be $\frac{d}{n} + \frac{\log(\frac{1}{\delta})}{n}$ \cite{Hanneke16, Haussler94}, where $d$ is a VC dimension of the class. However, the principles behind this optimal rate are not well understood. Clearly, it is not a uniform convergence principle since \emph{some ERMs are known to be suboptimal for this problem} \cite{Auer07}. Namely, the achieve exactly $\frac{d\log(\frac{n}{d})}{n} + \frac{\log(\frac{1}{\delta})}{n}$ rate in some cases. Moreover, since the local entropies are related to the uniform convergence rates, they do not appear in minimax lower bounds (in general introduced fixed points of local entropies are known to be of order greater than $\frac{d}{n}$ for some distributions \cite{Hanneke15}). 

There are two other principles, that guarantee generalization: \emph{sample compression} and \emph{stability}. At first, we give several formal definitions.

\begin{definition}[Sample compression schemes \cite{Floyd95}]
Define the sequence of permutation invariant functions $\kappa_{n}:(\mathcal{X} \times \mathcal{Y})^{n} \to \cup_{i = 1}^{k}(\mathcal{X} \times \mathcal{Y})^{i}$. These are \emph{compression functions}. The reconstruction function $\rho: \cup_{i = 1}^{k}(\mathcal{X} \times \mathcal{Y})^{i} \to \mathcal{Y}^{\mathcal{X}}$. Functions $\kappa_n$ and $\rho$ define a sample compression scheme of size $k$ if for any $n \in \mathbb{N}$ and $f \in \F$ and any sample $(x_{i}, f(x_{i}))_{i = 1}^{n} \in (\mathcal{X} \times \mathcal{Y})^{n}$ it holds $\kappa_{n}((x_{i}, f(x_{i}))_{i = 1}^{n}) \subseteq (x_{i}, f(x_{i}))_{i = 1}^{n}$ and denoting $\hat{f} = \rho(\kappa_{n}((x_{i}, f(x_{i}))_{i = 1}^{n}))$ we have $f(x_{i}) = \hat{f}(x_{i})$, for all $i = 1, \ldots, n$.
\end{definition}

A state of the art result for the generalization ability of sample compression schemes is the following. 
\begin{lemma}[Floyd and Warmuth \cite{Floyd95}]
\label{floyd}
Assume that $\rho$ is a reconstruction function of some sample compression scheme of size $k$. Given $f$ and any i.i.d. sample $(X_{i}, f(X_{i}))_{i = 1}^{n}$ it holds with probability at least $1 - \delta$ simultaneously for all sets $A \subset (X_{i}, f(X_{i}))_{i = 1}^{n}$ with $|A| \le k$ and with the property that $(\rho(A))(X_{i}) = f(X_{i})$ for $i = 1, \ldots, n$
\begin{equation}
\label{sample}
P((\rho(A))(X) \neq f(X)) \le \frac{k\log(\frac{en}{k})}{n - k} + \frac{\log(\frac{1}{\delta})}{n - k}.
\end{equation}
\end{lemma}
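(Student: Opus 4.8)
The plan is the classical union bound over the \emph{location} of the compressed subsample, exploiting that once the compressed points are fixed the reconstructed classifier is independent of the remaining points. I would enumerate the possible compressed sets by the index set $I \subseteq \{1,\dots,n\}$ they occupy: since the labels are determined by $f$ and $\kappa_n$ is permutation invariant, every admissible $A$ in the statement coincides with $A_I := (X_j, f(X_j))_{j \in I}$ for some $I$ with $|I| = i \le k$, and there are at most $\sum_{i=1}^{k}\binom{n}{i} \le (en/k)^k$ such index sets.

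First I would fix one index set $I$ of size $i$ and condition on $(X_j)_{j \in I}$. Under this conditioning the reconstructed function $h_I := \rho\big((X_j, f(X_j))_{j\in I}\big)$ is a fixed measurable function, while $(X_j)_{j \notin I}$ remain i.i.d.\ with law $P_X$ and independent of $h_I$; write $q_I := P(h_I(X)\neq f(X))$. Then the conditional probability that $h_I$ agrees with $f$ on all $n-i$ points outside $I$ equals $(1-q_I)^{n-i}$. Consequently, letting $E_I$ be the event that $h_I$ is consistent on the points outside $I$ while $q_I > \varepsilon$,
\[
P(E_I) = \E\big[\Ind[q_I > \varepsilon]\,(1-q_I)^{n-i}\big] \le (1-\varepsilon)^{n-k} \le e^{-\varepsilon(n-k)},
\]
where we used $n-i \ge n-k$.

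Next I would take a union bound over all index sets: $P\big(\bigcup_I E_I\big) \le (en/k)^k\, e^{-\varepsilon(n-k)}$. Choosing $\varepsilon = \frac{k\log(en/k)}{n-k} + \frac{\log(1/\delta)}{n-k}$ makes the right-hand side at most $\delta$ --- note $en/k \ge e$ because $k \le n$, so the truncation in $\log$ is vacuous and $k\log(en/k) = \ln\big((en/k)^k\big)$. On the complementary event, every admissible $A = A_I$, being consistent on the whole sample and in particular on the $n-i$ points outside $I$, satisfies $P\big((\rho(A))(X)\neq f(X)\big) \le \varepsilon$, which is exactly \eqref{sample}.

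The probabilistic content is light; the delicate points are bookkeeping. One must count the compressed object as an (unordered) subsample so the count is $\binom{n}{i}$ rather than $n^i$; handle the variable compression size $i \le k$ via the cumulative binomial estimate $\sum_{i=1}^{k}\binom{n}{i} \le (en/k)^k$; and check measurability of the map $(X_j)_{j\in I} \mapsto h_I$ so that the conditioning step is legitimate and $q_I$ is a genuine random variable. I do not expect any obstacle beyond these routine verifications.
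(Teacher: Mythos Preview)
Your argument is correct and is precisely the classical Floyd--Warmuth union-bound-over-index-sets proof. Note, however, that the paper itself does not prove this lemma: it is stated as a known result and attributed to \cite{Floyd95}, so there is no in-paper proof to compare against --- your proposal simply reproduces the standard proof from that reference.
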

An existence of sample compression schemes of size $O(d)$ is a well known open problem \cite{Floyd95} . However, even if we are able to construct a sample compression scheme of size $O(d)$ it is known \cite{Floyd95} (see the discussion after their Theorem $6$) that the rate $\frac{d\log(\frac{n}{d})}{n} + \frac{\log(\frac{1}{\delta})}{n}$ as stated by \ref{sample} can not be improved for some compression schemes. Simultaneously, ERM bounds based on the local empirical entropies \emph{are always not worse} \cite{Zhivotovskiy16}. Thus, in our framework in terms of general statistical performance sample compression schemes \emph{are not preferable} to ERM over the class $\F$. However, under natural assumptions the sample compression schemes are approaching minimax optimal rates.

\begin{definition}[Stable compression scheme]
A sample compression scheme $(\kappa_{n}, \rho)$ is stable iff for arbitrary $n$, $f \in \F$, sample  $(x_{i}, f(x_{i}))_{i = 1}^{n} \in (\mathcal{X} \times \mathcal{Y})^{n}$ and any $(x, y) \in (x_{i}, f(x_{i}))_{i = 1}^{n} \setminus \kappa_{n}((x_{i}, f(x_{i}))_{i = 1}^{n})$ it holds $\kappa_{n - 1}((x_{i}, f(x_{i}))_{i = 1}^{n} \setminus (x, y)) = \kappa_{n}((x_{i}, f(x_{i}))_{i = 1}^{n})$.
\end{definition}
This means that removing an element that is not in the compression set never changes the compression set of the subsample. The next general definition is motivated by the similar property, analyzed for the spans of intersection-closed classes \cite{Auer07}.
\begin{definition}[Homogeneous compression scheme]
A stable sample compression scheme $(\kappa_{n}, \rho)$ is homogeneous iff for arbitrary $n$, $f \in \F$, sample  $(x_{i}, f(x_{i}))_{i = 1}^{n} \in (\mathcal{X} \times \mathcal{Y})^{n}$ and any $(x, y) \in \kappa_{n}((x_{i}, f(x_{i}))_{i = 1}^{n}$ it holds $\kappa_{n}((x_{i}, f(x_{i}))_{i = 1}^{n}) \setminus (x, y) \subseteq \kappa_{n - 1}((x_{i}, f(x_{i}))_{i = 1}^{n} \setminus (x, y))$.
\end{definition}
Homogeneous compression schemes are stable compression schemes with the property that removing an element $(x, y)$ that is \emph{inside the compression set} $\kappa_{n}((x_{i}, f(x_{i}))_{i = 1}^{n}$ leaves the remaining compression elements inside the new compression set $\kappa_{n - 1}((x_{i}, f(x_{i}))_{i = 1}^{n} \setminus (x, y))$. As we already mentioned, they are naturally presented by several intersection-closed classes \cite{Auer07}.
\begin{theorem}
\label{compression}
For a stable compression scheme $(\kappa_{n}, \rho)$ of size $k$ it holds with probability at least $1 - \delta$ over the learning sample
\[
\E R(\hat{f}) \le \frac{k}{n + 1}, \quad R(\hat{f}) \lesssim \frac{k\log(\frac{1}{\delta})}{n},
\]
where $\hat{f} = \rho(\kappa_{n}((X_{i}, Y_{i})_{i = 1}^{n}))$.
Moreover, if the function $\rho$ takes its values in the class of VC dimension $d \lesssim k$, then there exists an efficient\footnote{Although we do not focus on computational issues, by efficiency we mean that to obtain this rate we would need to run the compression scheme exactly three times. This technique is based on the voting algorithm by Simon \cite{Simon15}.} modification of our sample compression scheme (with an output denoted by $\hat{g}$) that gives
\begin{equation}
\label{mod}
R(\hat{g}) \lesssim \frac{k\log(k)}{n} + \frac{\log(\frac{1}{\delta})}{n}.
\end{equation}
If $(\kappa_{n}, \rho)$ is also homogeneous, then
\[
R(\hat{f}) \lesssim \frac{k}{n} + \frac{\log(\frac{1}{\delta})}{n}.
\]
\end{theorem}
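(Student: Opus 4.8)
The plan is to prove the four assertions separately: the expectation bound by a leave-one-out argument, and the three high-probability statements by the moment method (the ``voting'' ingredient enters only for the efficient variant).

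\emph{Expectation bound.} Draw $Z_1,\dots,Z_{n+1}$ i.i.d.\ from $P$ and put $K=\kappa_{n+1}(Z_1,\dots,Z_{n+1})$, so $|K|\le k$ and $\rho(K)$ is consistent with the whole $(n+1)$-sample. Writing $\hat f^{-i}=\rho(\kappa_n((Z_j)_{j\neq i}))$, a single application of stability shows that if $Z_i\notin K$ then $\kappa_n((Z_j)_{j\neq i})=K$, hence $\hat f^{-i}=\rho(K)$ agrees with $Z_i$; therefore $\sum_{i=1}^{n+1}\Ind[\hat f^{-i}(X_i)\neq Y_i]\le|K|\le k$. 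As $(Z_j)_{j\neq i}$ is an i.i.d.\ $n$-sample and $Z_i$ is an independent fresh point, $\E\,\Ind[\hat f^{-i}(X_i)\neq Y_i]=\E R(\hat f)$ for each $i$; averaging over $i$ gives $\E R(\hat f)\le k/(n+1)$. The same computation yields the elementary fact that drives the rest: if $\rho(\kappa_n(S))$ errs on a fresh point $Z$, then $Z\in\kappa_{n+1}(S\cup Z)$.

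\emph{High probability, stable case.} Iterating this fact, expand the $q$-th moment $\E R(\hat f)^q=P\big(\rho(\kappa_n(S))\text{ errs on each of }Z^{(1)},\dots,Z^{(q)}\big)$ with $Z^{(1)},\dots,Z^{(q)}$ fresh i.i.d., pass to the enlarged i.i.d.\ sample of size $n+q$, and symmetrize over which $q$ of its points play the role of the fresh ones. The event then forces every ``fresh'' index to lie in a size-$\le k$ compression set of a sub-sample, and bounding the number of $q$-subsets that can satisfy all these membership constraints simultaneously --- this is where stability, and not merely the compression size, must be used --- yields $\E R(\hat f)^q\lesssim(Ckq/n)^q$; Markov's inequality with $q$ of order $\log(\tfrac1\delta)$ then gives $R(\hat f)\lesssim k\log(\tfrac1\delta)/n$. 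For the efficient variant, when $\rho$ maps into a class of VC dimension $d\lesssim k$, I would invoke Simon's voting algorithm~\cite{Simon15}: run the (stable) scheme a constant number of times on suitable subsamples and take the majority vote $\hat g$; a point is misclassified by $\hat g$ only when a constant fraction of the base hypotheses misclassify it, and since each base hypothesis already obeys the expectation bound and is certified by a size-$\le k$ compression set, the error of $\hat g$ can be controlled without the $\binom{n}{k}$-type union bound of Lemma~\ref{floyd}, giving $R(\hat g)\lesssim\frac{k\log k+\log(\frac1\delta)}{n}$; efficiency is immediate since only three runs are used.

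\emph{Homogeneous case, and the main obstacle.} For the additive bound I would sharpen the moment estimate using homogeneity: the $q$ mistakes cannot be explained by an independently chosen compression example apiece but are jointly explained by a set of size $O(k+q)$ --- peeling compression sets off the enlarged sample proceeds in a manner controlled by homogeneity --- so the combinatorial $k^q$ factor is replaced by $\binom{k+q}{q}$, the moment bound improves to $\E R(\hat f)^q\lesssim\big((k+q)/n\big)^q$, and Markov with $q$ of order $\log(\tfrac1\delta)$ gives $R(\hat f)\lesssim\frac{k+\log(\frac1\delta)}{n}$. The routine parts are the expectation bound and the reduction of the high-probability statements to moment estimates; the real difficulty is the moment estimates themselves, i.e.\ showing that the number of ``bad'' $q$-subsets of the enlarged sample is only $(Ck)^q$ in the stable case and $\binom{k+q}{q}$ in the homogeneous case, which requires genuinely exploiting stability/homogeneity rather than just the bound on the compression size.
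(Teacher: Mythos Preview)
Your plan coincides with the paper's on three of the four assertions: the leave-one-out argument for the expectation bound, the moment method with the symmetrized count $\psi(n,q)\le k^q$ for stable schemes, and the refinement $\psi(n,q)\le\binom{k+q}{q}$ in the homogeneous case (the paper proves the latter by an explicit $0/1$ encoding of which compression-set elements are hit at each peeling step, but the idea is exactly what you describe).

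The voting part, however, is too vague as stated and misses the actual mechanism. Knowing that ``each base hypothesis obeys the expectation bound and is certified by a size-$\le k$ compression set'' does not by itself produce the $\frac{k\log k}{n}+\frac{\log(1/\delta)}{n}$ rate; you need two specific ingredients. First, writing $E_i$ for the error region of the $i$-th run, one controls $P(E_i\cap E_j)=P(E_j\mid E_i)\,P(E_i)$, and the conditional factor is bounded by a \emph{uniform} ERM-type inequality over the VC class of dimension $d\lesssim k$ (Theorem~2 in \cite{Simon15}), applied to the random sub-sample $S_j\cap E_i$; this is precisely where the hypothesis $d\lesssim k$ enters, and it is not a compression argument at all. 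Second, to turn $P(E_i)$ into something that, after the logarithm inside the conditional bound, yields only $\log k$, the paper does \emph{not} use the expectation bound but rather the polynomial-tail estimate $P(E_i)\lesssim k^2/(n\delta^{1/k})$ obtained by taking $p=k$ in the moment inequality; substituting this gives $\log\big(P(E_i)n/k\big)\lesssim\log k+\tfrac{1}{k}\log(1/\delta)$, which is what produces the claimed rate. Your sketch does not identify either step, and without them the argument does not close.
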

We describe the modification of the scheme which is used to obtain \ref{mod}. Given a sample $S$ of size $3n$ we denote $S_{1/3}, S_{2/3}$ the first $n$ and $2n$ elements of the sample respectively. The modification of the compression scheme means that any point $X$ is classified by a major vote of three functions obtained by the compression scheme applied on $S_{1/3}, S_{2/3}, S$ (analogous to a so-called $\mathbf{L}_2$ algorithm, introduced for general empirical risk minimizers in \cite{Simon15}).

\begin{proof}
The in-expectation form of the bound is well-known. Under slightly different notation if follows from Lemma $2.2$ in \cite{Haussler94} or similar classic derivations in \cite{vapnik74}. Now we continue with the proof in deviation. Given an i.i.d. sample $(X_{i}, Y_{i})_{i = 1}^{n}$ we define $\hat{f} = \rho(\kappa_{n}((X_{i}, Y_{i})_{i = 1}^{n})$. We proceed by using the method of moments. For any $\varepsilon > 0$ and $p \in \mathbb{N}$ using Markov inequality we have
$
P(R(\hat{f}) \ge \varepsilon) \le \frac{\E(R(\hat{f}))^p}{\varepsilon^p}.
$
Following the same trick as Auer and Ortner \cite{Auer07} (Theorem $4$) we have that $\E(R(\hat{f}))^p$ is equal to the expected probability that $\hat{f}$ will be wrong on $p$ independent samples. Using the symmetrization argument, since all permutations of $n + p$ independent points have the same distributions and our sample compression scheme is permutation invariant we upper bound $\frac{\E(R(\hat{f}))^p}{\varepsilon^p}$ by $\frac{\psi(n, p)}{\varepsilon^p{n + p \choose p}}$, where $\psi(n, p)$ is maximum possible number of ways (given $(X_{i}, Y_{i})_{i = 1}^{n + p}$) to choose $p$ out of $n + p$ points, such that $\hat{f}$ calculated on the remaining $n$ points misclassifies these $p$ points.

Now we bound $\psi(n, p)$ for stable sample compression schemes. We prove that $\psi(n, p) \le k^p$. For $\psi(n, 1)$ the only point that is not in the learning sample must be one of the points inside that compression set of the $n + 1$ points, because otherwise, $\hat{f}$ will correctly classify the remaining point. Thus, $\psi(n, 1) \le k$.
We do the similar argument for $\psi(n, p)$ for general $p$. We will enumerate the points that are inside the wrongly classified subset one by one. We choose the first element that is in the compression set of the sample of $n + p$ elements. This element is one of at most $k$ possible. Otherwise, if none of the elements of this set is chosen into the wrongly classified set $\hat{f}$ will correctly classify the whole sample. After the first point is chosen we have a $n + p - 1$ subsample having its own compression set of size at most $k$. At least one of its elements must be chosen in its compression set because otherwise, $\hat{f}$ will make at most one misclassification (namely, on the point that was removed on the first stage). And so on, we simply have $\psi(n, p) \le k^p$. Thus, we have
\[
P(R(\hat{f}) \ge \varepsilon) \le \frac{k^p}{\varepsilon^p{n + p \choose p}} \le \frac{(kp)^p}{(\varepsilon n)^p}.
\]

We are interested in two values of $p$. The first one is $p = k$. Denoting $\delta = \frac{(k^2)^k}{(\varepsilon n)^k}$ we have that with probability at least $1 -\delta$ it holds
\begin{equation}
\label{pol}
R(\hat{f}) \le \frac{k^2}{n\delta^{\frac{1}{k}}}.
\end{equation}
The second value is $p = \lceil \log(\frac{1}{\delta})\rceil$. For $\varepsilon = \frac{ed\log(\frac{1}{\delta})}{n}$ we have that with probability at least $1 -\delta$ it holds
$R(\hat{f}) \le \frac{ed\log(\frac{1}{\delta})}{n}$, so the first claim of the theorem is established.  

Now we prove the bound for the modified algorithm \ref{mod}. Without loss of generality we assume that we are given the sample $S$ of size $3n$ and define $S_{1/3}, S_{2/3}$ to be the first $n$ and $2n$ elements of $S$ respectively. We define $\hat{f}_{1} = \rho(\kappa_{n}(S_{1/3}))$, $\hat{f}_{2} =  \rho(\kappa_{2n}(S_{2/3}))$ and $\hat{f}_3 = \rho(\kappa_{3n}(S))$ and $\hat{g}$ to be the major voting over $\hat{f}_{1}, \hat{f}_{2}, \hat{f}_{3}$, namely $\hat{g} = \sign(\hat{f}_{1} + \hat{f}_{2} + \hat{f}_{3})$. We also denote $E_{i} = \{x \in \X: \hat{f}_{i}(x) \neq f^*(x)\}$. Using the same technique as in the proof of Theorem $5$ in \cite{Simon15} we have $P(\hat{g}(X) \neq f^*(X)) \le 3\max\limits_{1 \le i < j \le 3}P(E_{i} \cap E_{j})$. So it is sufficient to control $P(E_{i} \cap E_{j})$, as the proof will be the same. We choose without loss of generality $E_{1}$ and $E_{2}$. We have $P(E_{1} \cap E_{2}) = P(E_{2}|E_{1})P(E_{1})$. We define $N = \sum\limits_{i = n + 1}^{2n}\Ind[X_{i} \in E_{1}]$. Conditionally on $S_{1/3}$ the random variable $N$ is binomial with mean $nP(E_{1})$. Moreover, $(X_{i}, Y_{i})$ for $i \in n + 1, \ldots, 2n$ with $X_{i} \in E_1$ (that are the elements in $S_{\frac{2}{3}} \cap E_{1}$) are conditionally independent given $S_{1/3}$. 

Now we want to prove that with probability at least $1 - \delta$ it holds
\begin{equation}
\label{conditional}
P(E_{2}|E_{1}) \lesssim \frac{k\log(N/k)}{N} + \frac{\log(\frac{1}{\delta})}{N}.
\end{equation}
To show this we notice that due to our assumption $\rho$ outputs classifiers from the VC class $\F'$ of dimension $d \lesssim k$. Thus, we may consider $E_2$ as an error set of an empirical risk minimizer over $\F'$. Using Theorem $2$ in \cite{Simon15} we have simultaneously for all empirical minimizers $\hat{h}$ over $\F'$ with respect to the sample $S_{\frac{2}{3}}$ that with probability at least $1 - \delta$ it holds $P(E_2) \lesssim \frac{k\log(n/k)}{n} + \frac{\log(\frac{1}{\delta})}{n}$. Since the set of all empirical minimzers with respect to the sample $S_{\frac{2}{3}}$ is the subset of the set of all empirical minimzers with respect to the sample $S_{\frac{2}{3}} \cap E_{1}$, applying the same Theorem $2$ for the learning sample $S_{\frac{2}{3}} \cap E_{1}$ (given $S_{1/3}$) we obtain \ref{conditional}.

If $P(E_{1}) \ge C(\frac{k\log{k}}{n} +\frac{\log(\frac{1}{\delta})}{n})$ for large enough $C$, then using Chernoff bound for $N$ with probability at least $1 - \delta$ we have $N \ge \frac{1}{2}P(E_{1})n$ and with probability at least $1 - \delta$ we have $N \le 2P(E_{1})n$. If, otherwise, $P(E_{1}) \le C(\frac{k\log{k}}{n} +\frac{\log(\frac{1}{\delta})}{n})$ then we have a desired bound since $P(E_{1} \cap E_{2}) \le P(E_{1})$. Finally, using monotonicity of $\log(x)/x$ we have with probability at least $1 - 3\delta$
\[
P(E_{2}|E_{1})P(E_{1}) \lesssim \frac{k\log(P(E_1)n/k)}{n} +  \frac{\log(\frac{1}{\delta})}{n}.
\]
Using \ref{pol} we have with probability at least $1 - \delta$ over $S_{1/3}$ that $P(E_1) \lesssim \frac{k^2}{n\delta^{\frac{1}{k}}}$. Thus, with probability at least $1 - 4 \delta$
\[
P(E_{1} \cap E_{2}) \lesssim \frac{k\log(k/\delta^{\frac{1}{k}})}{n} +  \frac{\log(\frac{1}{\delta})}{n} \lesssim \frac{k\log(k)}{n} +  \frac{\log(\frac{1}{\delta})}{n}.
\]
The inequality \ref{mod} follows. We should note that using the same technique we may continue refining the term $k\log(k)$. For example, in the proof we may use the above bound $R(\hat{f}) \le \frac{ed\log(\frac{1}{\delta})}{n}$ to simply obtain $R(\hat{g}) \lesssim \frac{k(1 \lor \log(\log(\frac{1}{\delta})))}{n} + \frac{\log(\frac{1}{\delta})}{n}$. The term $\log(\log(\frac{1}{\delta}))$ is small for any reasonable value of $\delta$. 

Finally, we prove the statement for homogeneous sample compression schemes. We are generalizing the counting argument of Auer and Ortner \cite{Auer07}. As before we have to upper bound $\psi(n, p)$. Assume that all $n + p$ elements are ordered and denote this sample by $S_{n + p}$. Consider a function $\hat{f}$ that misclassifies exactly $p$ elements and is constructed based on the remaining $n$ elements. We denote this current learning sample of $n$ elements by $S_n$. Consider the compression set of $n + p$ points (that is $\kappa_{n + p}(S)$) and choose the first element $x_1$ (the first component of the pair $(x_1, y_1)$) in it according to the order.  For this element there are only two possibilities: 
\begin{enumerate}
\item This element $x_1$ is misclassified by $\hat{f}$. In this case we will encode this element by $1$.
\item This element $x_1$ is correctly classified by $\hat{f}$. In this case $x_1$ is in the compression set of the learning sample of $n$ elements. We will be encode this $x_1$ by $0$.
\end{enumerate}
There are only two possible situations because a learning sample of $n$ elements is the subset of the set of $n + p$ elements and due to homogeneous property its compression set contains all elements from the intersection of $S_{n}$ with $\kappa(S_{n + p})$, formally $\kappa(S_{n + p})\cap S_{n} \subseteq \kappa(S_n)$.
Then, since $x_1$ is correctly classified we have $x_1 \in S_n$ and thus $x_1 \in \kappa(S_n)$ 

Now, after the first element $x_1$ is chosen we proceed to the second element $x_2$. There are two options: if on the first step the element $x_1$ was correctly classified by $\hat{f}$ we choose the next element (according to the order) in $\kappa_{n + p}(S_{n + p})$. Otherwise, if $x_1$ was misclassified we consider the set $S_{n + p} \setminus x_1$ and its compression set $\kappa_{n + p - 1}(S_{n + p} \setminus x_1)$ and choose $x_2$ from this compression set (once again according to the order). 

As we had for the first element $x_1$ we now have two options for $x_{2}$ depending on whether $\hat{f}$ classifies $x_2$ correctly or not. We encode $x_{2}$ by $0$ or $1$ depending on this and proceed analogously for $x_{3}$ as we did for $x_2$. Given $\hat{f}$, after at most $s \le k + p$ steps we will encode the set of elements $x_{1}, \ldots, x_{s}$ that consists of the set $\kappa_{n}(S_n)$ and $p$ misclassified elements. Finally, we easily observe that this encoding scheme relates a unique ordered sequence of at most $k + p$ zeroes and ones for every $\hat{f}$ that misclassifies exactly $p$ elements and is constructed based on the remaining $n$ elements. Since these classifiers make $p$ errors there are at most ${k + p\choose p}$ of these ordered sequences. Thus, $\psi(n, p) \le {k + p\choose p}$ and we have $P(R(\hat{f}) \ge \varepsilon) \le \frac{{k + p\choose k}}{\varepsilon^p{n + p \choose p}}$. By choosing $p = \lceil \log(\frac{1}{\delta})\rceil$ we easily obtain that with probability at least $1 - \delta$ we have $R(\hat{f}) \le \frac{ek}{n} + \frac{e\log(\frac{1}{\delta})}{n}$.
\end{proof}
As a direct corollary of Theorem \ref{compression} when the sample compression scheme is of size $k = O(d)$, where $d$ is a VC dimension, homogeneous sample compression schemes have an optimal learning rate up to constant factors by matching the lower bound \cite{Ehren89} and all stable sample compression schemes are optimal in expectation \cite{vapnik74} up to constant factors. The ideas behind these specific schemes appeared in some form previously in the literature. One of the first applications of sample compression schemes (before the initial general research \cite{Floyd95}) appears in \cite{vapnik74}.  Vapnik and Chervonenkis implicitly used stability and sample compression arguments to prove in expectation bound for the hard margin SVM of order $\frac{d}{n}$, where $d$ is a dimension. However, obtaining tight high probability results based on stability arguments is considered a difficult problem (see \cite{Devr95, Bousquet02, Haussler94, vapnik74} or the related open problem \cite{Warmuth04}). Another trick which was used previously in the literature to obtain $\frac{k\log(\frac{1}{\delta})}{n}$ bound given the $\frac{k}{n}$ bound in expectation is the following \cite{Haussler94}: one runs the algorithm $\log(\frac{1}{\delta})$ times on independent samples of a certain size and chooses the best classifier based on a small independent test sample. However, further obtaining of inequalities like \ref{mod} does not seem straightforward using this technique. 
\section*{Applications}
\subsubsection*{Improved hard margin SVM}
\begin{corollary}[Tight PAC bound for Halfspaces]
For the class $\F$ of linear halfspaces in the realizable case there exists a learning algorithm with polynomial running time and output denoted by $\hat{f}$, such that with probability at least $1 - \delta$
\begin{equation}
\label{svm}
R(\hat{f}) \lesssim \frac{d\log(d)}{n} + \frac{\log(\frac{1}{\delta})}{n}.
\end{equation}
\end{corollary}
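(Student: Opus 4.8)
The plan is to exhibit the hard-margin SVM as a \emph{stable} sample compression scheme of size $k = O(d)$ whose reconstruction function outputs linear halfspaces, and then to invoke the efficient modification of Theorem~\ref{compression}, namely the bound~\eqref{mod}. Once such a scheme is in hand, and since halfspaces in $\mathbb{R}^{d}$ form a class of VC dimension at most $d+1 \lesssim k$, inequality~\eqref{mod} applied with $k = O(d)$ gives exactly $R(\hat f) \lesssim \frac{d\log(d)}{n} + \frac{\log(\frac{1}{\delta})}{n}$ for the output $\hat f$ of the modified scheme, which is~\eqref{svm}; polynomiality is inherited because the modified algorithm only runs the underlying scheme three times.

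\emph{Constructing the scheme.} In the realizable case the sample $S = (x_{i}, y_{i})_{i=1}^{m}$ is linearly separable, so the max-margin program $\min \|w\|^{2}$ subject to $y_{i}\langle w, x_{i}\rangle \ge 1$ has a unique optimal solution $w^{*}(S)$; it classifies all of $S$ correctly and is computable in polynomial time, being a strictly convex quadratic program. Let $SV(S)$ be the set of support vectors, i.e. the indices with $y_{i}\langle w^{*}(S), x_{i}\rangle = 1$. The KKT conditions give $w^{*}(S) = \sum_{i\in SV(S)}\alpha_{i} y_{i} x_{i}$ with $\alpha_{i} \ge 0$, and Carath\'eodory's theorem for cones lets one retain at most $d$ of these vectors while still representing $w^{*}(S)$; for any such subsample $B$ a direct KKT check shows $w^{*}(B) = w^{*}(S)$ (feasibility of $w^{*}(S)$ for $B$ is inherited, and complementary slackness holds because every $i \in B$ is a support vector). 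We therefore let $\rho$ be ``compute the max-margin hyperplane of the input subsample'', obtaining a compression scheme of size $k \le d$ with $\rho$ valued in halfspaces, a class of VC dimension $\lesssim k$.

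\emph{Stability and conclusion.} Fix, once and for all, a permutation-invariant, polynomially computable rule selecting the compression set $\kappa_{m}(S) = B$ among all ``qualifying'' subsets (those $B \subseteq S$ with $|B| \le d$ and $w^{*}(B) = w^{*}(S)$) --- for instance the $\prec$-minimal qualifying subset for a fixed total order $\prec$ on finite subsets of $\mathcal{X}\times\mathcal{Y}$, which can be located by a greedy pass over the conic representation of $w^{*}(S)$ (a sequence of $O(m)$ linear feasibility programs), so that no enumeration over subsets is ever performed. Stability now follows from a monotonicity observation: if $(x,y) \notin \kappa_{m}(S)$ then $\kappa_{m}(S) \subseteq S \setminus \{(x,y)\} \subseteq S$ and $w^{*}(\kappa_{m}(S)) = w^{*}(S)$, and since dropping the constraint $(x,y)$ cannot increase the optimal value while keeping $w^{*}(S)$ feasible, uniqueness forces $w^{*}(S\setminus\{(x,y)\}) = w^{*}(S)$; hence the qualifying subsets of $S\setminus\{(x,y)\}$ are precisely the qualifying subsets of $S$ that avoid $(x,y)$, and the selection rule returns the same set, i.e. $\kappa_{m-1}(S\setminus\{(x,y)\}) = \kappa_{m}(S)$. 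Plugging this stable, VC-valued, polynomial-time scheme of size $k \le d$ into~\eqref{mod} yields the claimed bound.

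I expect the real obstacle to be the degenerate case in the stability step: many sample points can lie exactly on the margin, so $|SV(S)|$ may be as large as $m$ and the Carath\'eodory reduction to $O(d)$ vectors is genuinely needed; the retained subset must then be chosen by a rule that is simultaneously permutation invariant, computable in polynomial time (ruling out a brute-force search over the $\binom{m}{\le d}$ candidate subsets), and preserved under deletion of any point lying outside it --- the last requirement being exactly what the compression-scheme stability demands and what the bare description of the SVM, which pins down $w^{*}(S)$ but not a canonical subset of its support vectors, does not supply for free.
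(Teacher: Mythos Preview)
Your approach is correct and is essentially the paper's own: exhibit hard-margin SVM as a stable compression scheme of size $O(d)$, note that $\rho$ is valued in a class of VC dimension $O(d)$, and apply the voting modification~\eqref{mod} of Theorem~\ref{compression}. The paper simply cites Vapnik--Chervonenkis's notion of \emph{essential support vectors} (at most $d+1$ of them) for the existence of the stable scheme, whereas you spell out the same fact via conic Carath\'eodory and a $\prec$-minimal selection rule, together with the clean monotonicity/uniqueness argument for stability; these are two presentations of the same mechanism.

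One simplification resolves the obstacle you flag in your last paragraph. The compression map $\kappa$ is needed only for the \emph{analysis}; the learning algorithm never computes it. Whatever qualifying subset $\kappa_m(S)$ selects, $\rho(\kappa_m(S))$ is by construction the SVM classifier $x \mapsto \mathrm{sign}\langle w^{*}(S),x\rangle$, and the modified learner $\hat g$ is just the majority vote of three SVM outputs on $S_{1/3},S_{2/3},S$. Polynomial running time therefore follows directly from SVM being a polynomial-time quadratic program, independently of how $\kappa$ is defined. Consequently you may take $\kappa_m(S)$ to be the $\prec$-minimal qualifying subset for \emph{any} fixed total order on finite subsets of $\mathcal{X}\times\mathcal{Y}$, with no computability requirement; your stability argument then goes through verbatim, and the tension you identify between polynomial-time selection and stability of $\kappa$ disappears.
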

\begin{proof}
At first we show that in the realizable case the separating hyperplane in $\mathbb{R}^d$ constructed by the SVM defines a stable compression of size at most $d + 1$. This follows from the existence of the so-called \emph{essential support vectors} (Chapter $14$ in \cite{vapnik74}). Taking into account that the VC dimension of this class if equal to $d + 1$ using Theorem \ref{mod} we obtain $\frac{d\log(\frac{1}{\delta})}{n}$ rate for SVM and $\frac{d\log(d)}{n} + \frac{\log(\frac{1}{\delta})}{n}$ for its modification. Finally, we observe that both SVM and its modification (voting over three SVMs) are algorithms with polynomial running time.
\end{proof}
This bound is a direct corollary of our general result and almost matches the minimax lower bound $\frac{d}{n} + \frac{\log(\frac{1}{\delta})}{n}$ \cite{Ehren89} . Previously the polynomial time algorithm with the tight risk bound was known only for the class of homogenious halfspaces and only for log-concave distributions of $X$. That risk bound was obtained via more involved arguments and both assumptions were crucial \cite{Balcan13}. Moreover, \ref{svm} compares favourably with the best possible rate that can be obtained via the uniform convergence principle for the problem of learning halfspaces, namely, $\frac{d\log(\frac{n}{d})}{n} + \frac{\log(\frac{1}{\delta})}{n}$ (see \cite{Zhivotovskiy16} for related discussions).
\subsubsection*{New online to batch conversion}
Assume that in the realizable online framework we are given a conservative online learning algorithm making at most $k$ mistakes on any sample (see \cite{Floyd95} or \cite{Littlestone89a} for more details on the framework). By conservative we mean that the algorithm does not change its state after a correct classification of the next point. 

Our goal will be to convert an online learning algorithm to a learning algorithm in the standard i.i.d. setting. Assume that the set $\X$ is ordered. Consider the following classifier with an output $\hat{f}$ based on the i.i.d sample $S = (X_{i}, Y_{i})_{i = 1}^{n}$. Given a sample $S$ we define $S^*$ to be a set consisting of pairs $(X_i, Y_i)$ sorted according to the order of $\X$ and $S^*_{\preceq x}$ as the subset of $S^*$ with pairs $(X_i, Y_i)$ such that all $X_{i}$ precede the fixed element $x$ . Now define for any $x \in \X$
\begin{itemize}
\item If there exists $j \in \{1, \ldots, n\}$ such that $x = X_j$, then define $\hat{f}(x) = Y_j$. 
\item Otherwise, define $\hat{f}(x)$ as a label of $x$ that we obtain by applying the last classifier that we get after running our conservative algorithm on the set $S^*_{\preceq x}$.
\end{itemize}
It is straightforward to see that $\hat{f}$ is an output of sample compression scheme of size $k$ \cite{Floyd95}. Moreover, due to the fact that the algorithm is conservative, it appears that the corresponding sample compression scheme is stable and thus we have a rate $R(\hat{f}) \le \frac{k\log(\frac{1}{\delta})}{n}$. This already improves over the known bounds for the \emph{longest surviving strategy}  with the rate $\frac{k\log(\frac{k}{\delta})}{n}$ \cite{Littlestone89a}.

However, when we are able to guarantee that the output space of our online algorithm has VC dimension $d \lesssim k$ we may apply the modification \ref{mod}. As before we construct three samples $S_{\frac{1}{3}}, S_{\frac{2}{3}}, S$ and corresponding ordered sets $S^*_{\frac{1}{3}, \preceq x}, S^*_{\frac{2}{3}, \preceq x}, S^*_{\preceq x}$. Now the modified $\hat{g}(x)$ is defined as the majority vote over three values that we obtain by applying the last classifier that we get after running our conservative algorithm on sets $S^*_{\frac{1}{3}, \preceq x}, S^*_{\frac{2}{3}, \preceq x}, S^*_{\preceq x}$. This modification gives the rate $R(\hat{g}) \le \frac{k\log(k)}{n} + \frac{\log(\frac{1}{\delta})}{n}$ that almost coincides with the best known guaranties $\frac{k}{n} + \frac{\log(\frac{1}{\delta})}{n}$ from \cite{Littlestone89a}. Interestingly, that the last bound is achieved using a different strategy and martingale-based proof techniques.

\acks{We would like to thank Steve Hanneke for several helpful discussions and for pointing the gap in the preliminary version of this work and anonymous reviewers of the conference version for their useful suggestions.}

\end{document}